\theoremstyle{thmstyleone}
\newtheorem{definition}{Definition}
\newtheorem{example}{Example}
\newtheorem{lemma}{Lemma}
\newtheorem{remark}{Remark}
\newtheorem{theorem}{Theorem}
\def\R{\mathbb{R}}
\begin{document}
\title[Article title]{Extended convergence analysis of the Scholtes-type regularization for cardinality-constrained optimization problems}
\author[1]{\fnm{Sebastian} \sur{Lämmel}}\email{sebastian.laemmel@mathematik.tu-chemnitz.de}
\author[1]{\fnm{Vladimir} \sur{Shikhman}}\email{vladimir.shikhman@mathematik.tu-chemnitz.de}
\affil[1]{\orgdiv{Faculty of Mathematics}, \orgname{TU Chemnitz}, \orgaddress{\street{Reichenhainer Straße 41}, \city{Chemnitz}, \postcode{09126}, \country{Germany}}}

\abstract{ We extend the convergence analysis of the Scholtes-type regularization method for cardinality-constrained optimization problems. Its behavior is clarified in the vicinity of saddle points, and not just of minimizers as it has been done in the literature before. This becomes possible by using as an intermediate step the recently introduced regularized continuous reformulation of a cardinality-constrained optimization problem. We show that the Scholtes-type regularization method is well-defined locally
around a nondegenerate T-stationary point of this regularized continuous reformulation. Moreover,
the nondegenerate Karush-Kuhn-Tucker points of the corresponding Scholtes-type regularization converge to a T-stationary point having the same index, i.e. its topological type persists. Overall, we conclude that the global structure of the regularized continuous reformulation and its Scholtes-type regularization essentially coincide.}

\keywords{cardinality-constrained optimization problem, Scholtes-type regularization method, nondegenerate T-stationarity, index, genericity}
\maketitle

\setlength{\abovedisplayskip}{5pt}
\setlength{\abovedisplayshortskip}{5pt}
\setlength{\belowdisplayskip}{5pt}
\setlength{\belowdisplayshortskip}{5pt}

\section{Introduction}\label{sec1}

In nonconvex optimization Scholtes-type regularization methods became popular since the seminal paper \cite{scholtes:2001}. Typically, hard nonsmooth constraints are relaxed by means of a parameter. 
Then, Karush-Kuhn-Tucker points of the induced nonlinear programs need to be computed.
They are finally shown to converge towards some suitably defined stationary points of the original optimization problem as the regularization parameter tends to zero. Scholtes-type regularization methods for mathematical programs with complementarity (MPCC), vanishing (MPVC), switching (MPSC), and orthogonality type constrains (MPOC) were examined along these lines in the literature so far, see e.g. \cite{scholtes:2001}, \cite{izmailov:2009}, \cite{kanzow:2021}, and \cite{laemmel:mpoc} for further details, respectively. 

In this paper, we study the Scholtes-type regularization method for the class of cardinality-constrained optimization problems:
\[
\mbox{CCOP}: \quad
\min_{x} \,\, f(x)\quad \mbox{s.\,t.} \quad h(x)=0, \quad g(x)\ge 0, \quad \left\|x\right\|_0\le s
\]
with the feasible set given by equality, inequality, and cardinality constraints, where the so-called zero ”norm” $\left\|x\right\|_0 = \left\vert \left\{i \in \{1,\ldots, n\}\; \vert\; x_i\ne 0\right\}\right\vert$ is counting non-zero entries of $x$.
Here, we assume that the objective function $f$, as well as the equality and inequality constraints 
$h=\left(h_p, p \in P\right)$,
$g=\left(g_q, q \in Q\right)$ are twice continuously differentiable, and $s \in \{0,1,\ldots, n-1\}$ is an integer. 
In order to arrive at the Scholtes-type regularization, the so-called continuous reformulation of CCOP 
from \cite{burdakov:2016} is helpful:
\begin{equation}
     \label{eq:relax}
     \begin{array}{rl}
    \displaystyle \min_{x,y} \,\, f(x) \quad \mbox{s.\,t.} \quad 
   &h(x)=0, \quad g(x)\ge 0, \quad \displaystyle \sum_{i=1}^{n} y_i \geq  n - s,\\
   & x_i y_i =0,  \quad 0 \leq y_i \leq 1, \quad 
         i=1, \ldots, n.
   \end{array}
\end{equation}
As pointed out there, $\bar x$ solves CCOP if and only if there exists a vector $\bar y$ such that $\left(\bar x, \bar y\right)$ solves (\ref{eq:relax}). 
In order to tackle (\ref{eq:relax}) numerically, \cite{bucher:2018} suggests to regularize the orthogonality type constraints by using the Scholte's idea, cf. \cite{scholtes:2001}: 
\begin{equation}
     \label{eq:relax-scholtes}
          \begin{array}{rl}
   \displaystyle  \min_{x,y} \,\, f(x)\quad \mbox{s.\,t.} 
   &h(x)=0, \quad g(x)\ge 0, \quad \displaystyle \sum_{i=1}^{n} y_i \geq  n - s, \\
   & -t\le x_i y_i \le t,\quad  0 \leq y_i \leq 1, \quad
         i=1, \ldots, n,
   \end{array}
\end{equation}
where $t>0$. Further in \cite{branda:2018}, the authors prove that -- under some suitable constraint qualification and second-order sufficient condition -- the Scholtes-type regularization method is well-defined locally around a minimizer of (\ref{eq:relax}). Moreover, the Karush-Kuhn-Tucker points of (\ref{eq:relax-scholtes}) converge to an S-stationary point of (\ref{eq:relax}) whenever $t \rightarrow 0$. 

Our goal is to extend the convergence analysis of the Scholtes-type regularization method beyond the case of minimizers of (\ref{eq:relax}), but also for all kinds of its saddle points. By doing so, we intend to relate the indices of nondegenerate Karush-Kuhn-Tucker points of the Scholtes-type regularization with those of T-stationary points of the regularized continuous reformulation. Here, nondegeneracy refers to some tailored versions of linear independence constraint qualification, strict complementarity and second-order regularity. Assuming nondegeneracy, Karush-Kuhn-Tucker points and T-stationary points can be classified according to their quadratic and T-index, respectively. The index encodes the local structure of the optimization problem under consideration in algebraic terms and its global structure in the sense of Morse theory, see \cite{Jongen:2000} and \cite{laemmel:reform}. We note that for our purpose we need to preliminarily regularize the continuous reformulation (\ref{eq:relax}). The reason is that all T-stationary points of (\ref{eq:relax}) -- considered as an MPOC instance -- turn out to be degenerate, cf. \cite{laemmel:mpoc}. To overcome this obstacle, it has been suggested in \cite{laemmel:reform} not only to linearly perturb the objective function in (\ref{eq:relax}) with respect to $y$-variables, but also to additionally relax the upper bounds on them. 
As for our main results, the Scholtes-type regularization method proves to be well-defined locally around a nondegenerate T-stationary point of the regularized continuous reformulation. Moreover, the nondegenerate Karush-Kuhn-Tucker points of its Scholtes-type regularization converge to a T-stationary point having the same index.

The article is organized as follows. In Section 2 we discuss some preliminary results on the regularized continuous reformulation of CCOP. 
Section 3 is devoted to the extended convergence analysis of its Scholtes-type regularization.

Our notation is standard. The cardinality of a finite set $A$ is denoted by $\vert A \vert$. The $n$-dimensional Euclidean space is denoted by $\mathbb{R}^n$ with the coordinate vectors $e_i,i= 1,\ldots, n$. The vector consisting of ones is denoted by $e$. Given a twice continuously differentiable function $f:\mathbb{R}^n\rightarrow \mathbb{R}$, $\nabla f$ denotes its gradient, and $D^2f$ stands for its Hessian.

\section{Regularized continuous reformulation}
We associate with CCOP the regularized continuous reformulation from \cite{laemmel:reform}:
\[
     \begin{array}{rl}
    \displaystyle \mathcal{R}(c,\varepsilon): \quad \min_{x,y} \,\, f(x) +c^Ty\quad \mbox{s.\,t.} 
   &h(x)=0, \quad g(x)\ge 0, \quad \displaystyle \sum_{i=1}^{n} y_i \geq  n - s, \quad \\
   &x_i y_i =0, \quad 0 \leq y_i \leq 1+\varepsilon,  \quad i=1, \ldots, n,
   \end{array}
\]
where the components of $c \in \R^n$ $c$ are positive and pairwise different, and $0 < \varepsilon \leq \frac{1}{n-s}$.
Given a feasible point  $(\bar x,\bar y)$ of $\mathcal{R}$, we define the index sets which correspond to the orthogonality type constraints $x_i y_i=0$, $y_i \geq 0$:
    \[
     \begin{array}{c}
        a_{01}\left(\bar x,\bar y\right)=\left\{i\,\left\vert\, \bar x_i=0,\bar y_i>0\right.\right\}, \quad 
    a_{10}\left(\bar x,\bar y\right)=\left\{i\,\left\vert\, \bar x_i\ne 0,\bar y_i=0\right.\right\}, \\
    a_{00}\left(\bar x,\bar y\right)=\left\{i\,\left\vert\, \bar x_i=0,\bar y_i=0\right.\right\}.       
     \end{array}
    \]
The index sets of the active inequality constraints will be denoted by
\[
 Q_0(\bar x)=\left\{q \in Q \, \left\vert\, g_q(\bar x)=0\right.\right\}, \quad\mathcal{E}(\bar y)=\left\{i \,\left\vert\,\bar y_i=1+\varepsilon\right.\right\}.
\]
The regularized continuous reformulation $\mathcal{R}$ is a special case of MPOC. The latter class was examined in \cite{laemmel:mpoc}, where the MPOC-tailored linear independence constraint qualification and the notion of (nondegenerate) T-stationary points with the corresponding T-index were introduced. 
In \cite{laemmel:reform}, these concepts were applied to the regularization $\mathcal{R}$. 

\begin{definition}[MPOC-LICQ, \cite{laemmel:reform}]
We say that a feasible point $(\bar x,\bar y)$ of $\mathcal{R}$ satisfies the MPOC-tailored linear independence constraint qualification (MPOC-LICQ) if the following vectors are linearly independent:
\[
\begin{array}{c}
\begin{pmatrix}
\nabla h_p(\bar x)\\0
\end{pmatrix}, p \in P, \quad 
\begin{pmatrix}
\nabla g_q(\bar x)\\0
\end{pmatrix}, q \in Q_0(\bar x), \quad  
\begin{pmatrix}
0\\
e_i
\end{pmatrix},i\in \mathcal{E}(\bar y), \quad
\begin{pmatrix}
0\\
e
\end{pmatrix}
\mbox{ if } \sum\limits_{i=1}^{n} \bar y_i = n - s,
\\
\begin{pmatrix}
e_i\\
0
\end{pmatrix}, i \in a_{01}\left(\bar x,\bar y\right)\cup a_{00}\left(\bar x,\bar y\right), \quad 
\begin{pmatrix}
0\\
e_i
\end{pmatrix}, i \in a_{10}\left(\bar x, \bar y\right)\cup a_{00}\left(\bar x, \bar y\right).
\end{array}
\]
\end{definition}

\begin{definition}[T-stationary point, \cite{laemmel:reform}]
\label{def:t-stat}
A feasible point $(\bar x,\bar y)$ of $\mathcal{R}$ is called T-stationary if there exists multipliers 
\[
\begin{array}{l}
\bar \lambda_p, p \in P,
\bar \mu_{1,q}, q\in Q_0(\bar x),\bar \mu_{2,i},i \in \mathcal{E}(\bar y),\bar \mu_3,\\ \bar \sigma_{1,i}, i \in a_{01}\left(\bar x,\bar y\right),\bar \sigma_{2,i}, i \in a_{10}\left(\bar x,\bar y\right),\bar \varrho_{1,i},\bar \varrho_{2,i}, i \in a_{00}\left(\bar x, \bar y\right),
\end{array}
\]
such that the following conditions hold:
\begin{equation}
   \label{eq:tstat-1} 
   \begin{array}{rcl}
  \begin{pmatrix}
   \nabla f(\bar x)\\
   c
  \end{pmatrix}&=& \displaystyle\sum\limits_{p \in P}\bar \lambda_p 
  \begin{pmatrix}
  \nabla h_p(\bar x)\\
  0
  \end{pmatrix}+
    \sum\limits_{q \in Q_0(\bar x)}\bar \mu_{1,q} 
    \begin{pmatrix}
    \nabla g_q(\bar x)\\
    0
    \end{pmatrix}-
    \sum\limits_{i\in\mathcal{E}(\bar y)} \bar \mu_{2,i} \begin{pmatrix}
    0\\
    e_i
\end{pmatrix}\\ 
    && +
\bar \mu_3 \begin{pmatrix}
0\\
e
\end{pmatrix}\displaystyle +\sum\limits_{i \in a_{01}\left(\bar x,\bar y\right)} \bar \sigma_{1,i}
    \begin{pmatrix}
    e_{ i}\\
    0
    \end{pmatrix}
    +\sum\limits_{i \in a_{10}\left(\bar x,\bar y \right)} \bar \sigma_{2,i}    \begin{pmatrix}
    0\\
    e_{ i}
    \end{pmatrix}\\ 
    &&\displaystyle+\sum\limits_{i \in a_{00}\left(\bar x, \bar y\right)} \left(\bar \varrho_{1,i}
    \begin{pmatrix}
   e_{ i}\\
    0
    \end{pmatrix}
   +\bar \varrho_{2,i}
    \begin{pmatrix}
    0\\
    e_{ i}
    \end{pmatrix}\right), \end{array}
\end{equation}
\begin{equation}
   \label{eq:tstat-2} \bar \mu_{1,q} \ge 0,q\in Q_0\left(\bar x\right),
   \bar \mu_{2,i} \ge 0, i\in \mathcal{E}(\bar y),\bar \mu_3\ge 0, \bar\mu_3\left(
   \sum\limits_{i=1}^{n} \bar y_i -(n-s)\right)=0,
\end{equation}
\begin{equation}
   \label{eq:tstat-3} \bar \varrho_{1,i}=0 \mbox{ or }\bar \varrho_{2,i}\le 0, i \in a_{00}\left(\bar x,\bar y\right).
\end{equation}
\end{definition}
We define the appropriate Lagrange function:
\[
        \begin{array}{rcl}
        L^\mathcal{R}(x,y)&=& \displaystyle
   f(x)+
   c^T y -
        \sum\limits_{p \in P}\bar \lambda_p
  h_p( x)-
    \sum\limits_{q \in Q_0(\bar x)}\bar \mu_{1,q} 
    g_q(x) \\  & &\displaystyle+\sum\limits_{i\in\mathcal{E}(\bar y)} \bar \mu_{2,i} 
   \left( y_i- (1+\varepsilon)\right)
-
\bar \mu_3 
 \left(\sum\limits_{i=1}^{n} y_i - (n-s)\right)
   \\ 
    &&\displaystyle -\sum\limits_{i \in a_{01}\left(\bar x, \bar y\right)} \bar \sigma_{1,i}
     x_{ i}
    -\sum\limits_{i \in a_{10}\left(\bar x, \bar y\right)} \bar \sigma_{2,i}   
     y_{ i} -\sum\limits_{i \in a_{00}\left(\bar x, \bar y \right)} \left(\bar \varrho_{1,i}
    x_{ i}
   +\bar \varrho_{2,i}
     y_{ i}
    \right).
    \end{array}
\]
Moreover, we set for the corresponding tangent space:

\[
    \mathcal{T}^{\mathcal{R}}_{(\bar x,\bar y)} =\left\{
\xi \in \R^{2n}\,\left\vert\, \begin{array}{l} \begin{pmatrix}
Dh_p(\bar x), 0\end{pmatrix} \xi=0, p \in P, 
\begin{pmatrix}
Dg_q(\bar x), 0\end{pmatrix}\xi=0,q \in Q_0(\bar x),\\
\begin{pmatrix}
0,e_i
\end{pmatrix}\xi=0, i\in \mathcal{E}(\bar y), 
\begin{pmatrix}
0,e
\end{pmatrix}\xi=0 \mbox{ if } \displaystyle \sum_{i=1}^{n} \bar y_i = n - s,
\\
\begin{pmatrix}
e_i,0
\end{pmatrix}\xi=0, i \in a_{00}(\bar x,\bar y) \cup a_{01}(\bar x,\bar y),\\
\begin{pmatrix}
0,e_i
\end{pmatrix}\xi=0, i \in a_{00}(\bar x,\bar y) \cup a_{10}(\bar x,\bar y)
\end{array}
\right.\right\}.
\]

\begin{definition}[Nondegenerate T-stationary point, \cite{laemmel:reform}]
A T-stationary point $(\bar x,\bar y)$ of $\mathcal{R}$ with multipliers $(\bar \lambda, \bar \mu, \bar \sigma, \bar \varrho)$ is called nondegenerate if

NDT1: MPOC-LICQ holds at $(\bar x,\bar y)$,

NDT2: $\bar \mu_{1,q}>0$, $q \in Q_0\left(\bar x\right)$, $\bar \mu_{2,i}>0$, $i \in \mathcal{E}\left(\bar y\right)$, and $\bar \mu_3>0$ if  $\sum\limits_{i=1}^{n} \bar  y_i =n-s$,

NDT3: $\bar\varrho_{1,i}\ne 0$ and $\bar\varrho_{2,i}< 0$, $i\in a_{00}\left(\bar x,\bar y\right)$,
    
NDT4: the matrix $D^2 L^{\mathcal{R}}(\bar x,\bar y)\restriction_{\mathcal{T}^\mathcal{R}_{(\bar x,\bar y)}}$ is nonsingular.

\noindent
For a nondegenerate T-stationary point we eventually use an additional condition:

NDT6: $\bar \sigma_{1,i}\ne 0$, $i\in a_{01}(\bar x, \bar y)$.
\end{definition}

\begin{definition}[T-index, \cite{laemmel:reform}]
Let $(\bar x,\bar y)$ be a nondegenerate T-stationary point of $\mathcal{R}$ with unique multipliers $\left(\bar \lambda,\bar \mu, \bar \sigma,\bar \varrho\right)$. The number of negative eigenvalues of the matrix $D^2 L^{\mathcal{R}}(\bar x,\bar y)\restriction_{\mathcal{T}^\mathcal{R}_{(\bar x,\bar y)}}$ is called its quadratic index ($QI$). The cardinality of $a_{00}\left(\bar x,\bar y\right)$ is called the biactive index ($BI$) of $(\bar x,\bar y)$. We define the T-index ($TI$) as the sum of both, i.\,e. $TI=QI+BI$.
\end{definition}

The nondegeneracy conditions NDT1-NDT4 are tailored for $\mathcal{R}$. Note that NDT2 corresponds to the strict complementarity and NDT4 to the second-order regularity as they are typically defined in the context of nonlinear programming. NDT1 substitutes the usual linear independence constraint qualification. NDT3 is new and says that the multipliers corresponding to biactive orthogonality type constraints must not vanish. With a nondegenerate T-stationary point $(\bar x,\bar y)$ a T-index can be associated. The T-index captures the structure of $\mathcal{R}$ locally around $(\bar x,\bar y)$ and defines the type of a T-stationary point, see \cite{laemmel:reform} for details. In particular, nondegenerate minimizers of $\mathcal{R}$ are characterized by a vanishing T-index. If the T-index does not vanish, we get all kinds of saddle points.
We note that nondegenerate M-stationary points of CCOP, see e.g. \cite{laemmel:ccop}, naturally correspond to nondegenerate T-stationary points of $\mathcal{R}$ and vice versa. As shown in \cite{laemmel:reform}, also their M- and T-indices coincide. Thus, the regularized continuous reformulation $\mathcal{R}$ can be likewise studied instead of (\ref{eq:relax}).

Next Lemma \ref{lem:a01} provides insights into the structure of auxiliary $y$-variables corresponding to a T-stationary point of $\mathcal{R}$. Its proof will be useful in what follows as well.

\begin{lemma}[Auxiliary $y$-variables in $\mathcal{R}$, \cite{laemmel:reform}]
\label{lem:a01} 
Let $(\bar x,\bar y)$ be a T-stationary point of $\mathcal{R}$, then it holds:

a) the summation inequality constraint is active, i.\,e. $ \sum\limits_{i=1}^{n} \bar y_i =n - s$,

b) the index set $a_{01}(\bar x,\bar y)$ consists of exactly $n-s$ elements,
   
c) $n-s-1$ components of $\bar y$ are equal to $1+\varepsilon$, one component is equal to $1-(n-s-1)\varepsilon$, and $s$ remaining components vanish.
\end{lemma}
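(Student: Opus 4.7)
The plan is to extract everything from the $y$-component of the T-stationarity equation (\ref{eq:tstat-1}). Two features of $\mathcal{R}(c,\varepsilon)$ do the work: the entries of $c$ are strictly positive and pairwise different, and the regularization parameter satisfies $\varepsilon\le 1/(n-s)$. For each $i\in a_{01}(\bar x,\bar y)$ the $y_i$-row of (\ref{eq:tstat-1}) collapses to $c_i=\bar\mu_3$ when $\bar y_i<1+\varepsilon$ and to $c_i=\bar\mu_3-\bar\mu_{2,i}$ when $\bar y_i=1+\varepsilon$; for indices $i$ with $\bar y_i=0$ the free multipliers $\bar\sigma_{2,i}$ or $\bar\varrho_{2,i}$ absorb the row without yielding sign information.

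For assertion (a), I argue by contradiction. If $\sum_i\bar y_i>n-s$, the complementarity in (\ref{eq:tstat-2}) forces $\bar\mu_3=0$. But then any $i\in a_{01}(\bar x,\bar y)$ yields either $c_i=0$ or $c_i=-\bar\mu_{2,i}\le 0$, in conflict with $c_i>0$. Hence $a_{01}(\bar x,\bar y)$ must be empty, so $\sum_i\bar y_i=0$, which contradicts $\sum_i\bar y_i\ge n-s\ge 1$.

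For (b) and (c), the pairwise distinctness of $c$ immediately bounds the cardinality of $\{i\in a_{01}(\bar x,\bar y)\mid \bar y_i<1+\varepsilon\}$ by one. Setting $k=|a_{01}(\bar x,\bar y)|$ and using part (a) to write $\sum_{i\in a_{01}(\bar x,\bar y)}\bar y_i=n-s$, I would first rule out the degenerate case that all $k$ values equal $1+\varepsilon$: the relation $k(1+\varepsilon)=n-s$ together with $\varepsilon\in(0,1/(n-s)]$ would force the integer $k=(n-s)/(1+\varepsilon)$ into $(n-s-1,n-s)$, which contains no integer. Therefore exactly one $i^*\in a_{01}(\bar x,\bar y)$ has $\bar y_{i^*}\in(0,1+\varepsilon)$, and the two bounds $0<\bar y_{i^*}<1+\varepsilon$ localize $k$ to the interval $((n-s)/(1+\varepsilon),(n-s)/(1+\varepsilon)+1)$; using the estimate $(n-s)/(1+\varepsilon)>n-s-1$ valid under $\varepsilon\le 1/(n-s)$, the unique integer there is $k=n-s$, and then $\bar y_{i^*}=1-(n-s-1)\varepsilon$. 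The remaining $s$ indices sit in $a_{10}(\bar x,\bar y)\cup a_{00}(\bar x,\bar y)$ and thus $\bar y_i=0$, yielding (b) and (c).

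The delicate step is the integer-counting that yields $k=n-s$: it is the combination of the pairwise distinctness of $c$, which permits at most one ``interior'' value $\bar y_i\in(0,1+\varepsilon)$, with the sharp calibration $\varepsilon\le 1/(n-s)$, which leaves exactly one integer in the admissible range for $k$, that makes the structure of $\bar y$ rigid. Relaxing either hypothesis would destroy the exact description in (c).
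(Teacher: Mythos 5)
Your proof is correct and takes essentially the same route as the paper's: part (a) via $c>0$ forcing $a_{01}(\bar x,\bar y)=\mathcal{E}(\bar y)=\emptyset$ once $\bar\mu_3=0$, and parts (b), (c) via the pairwise distinctness of $c$ (which allows at most one index of $a_{01}(\bar x,\bar y)$ outside $\mathcal{E}(\bar y)$) combined with the calibration $\varepsilon\le \frac{1}{n-s}$. The only cosmetic difference is that you package the cardinality count as localizing the integer $k=\vert a_{01}(\bar x,\bar y)\vert$ in an open unit interval, whereas the paper runs two separate contradictions for $k<n-s$ and $k>n-s$; the arithmetic content is identical.
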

\begin{proof}
 a)
Let $(\bar x,\bar y)$ be a T-stationary point of $\mathcal{R}$
and $\displaystyle \sum_{i=1}^{n} \bar y_i >n - s$. Then, there exist multipliers
$(\bar \lambda, \bar \mu, \bar \sigma, \bar \varrho)$
such that (\ref{eq:tstat-1})--(\ref{eq:tstat-3}) are fulfilled.
Since $\bar \mu_3=0$, we have that the $(n+i)$-th row of (\ref{eq:tstat-1}) reads as
\[
c_i=\left\{
\begin{array}{ll}
     -\bar \mu_{2,i},&\mbox{for }i \in \mathcal{E}(\bar y),  \\
     \bar \sigma_{2,i}, &\mbox{for }i \in a_{10}\left(\bar x,\bar y\right),\\
     \bar \varrho_{2,i}, &\mbox{for }i\in a_{00}\left(\bar x,\bar y\right),\\
     0,&\mbox{else}.
\end{array}\right.
\]
Due to $c>0$, the sets $a_{01}\left(\bar x,\bar y\right)$ and $\mathcal{E}(\bar y)$ have to be equal and, moreover,
due to $\bar \mu_{2,i}\ge 0$, they have to be empty. But then, clearly, $ \sum_{i=1}^{n} \bar  y_i =0<n - s$, a contradiction.

b)
Since $(\bar x,\bar y)$ is a T-stationary point, there exist
$(\bar \lambda, \bar \mu, \bar \sigma, \bar \varrho)$
such that (\ref{eq:tstat-1})--(\ref{eq:tstat-3}). By the proof of statement a) we can conclude that
$\bar \mu_3 >0$. Hence,
the $(n+i)$-th row reads as
\begin{equation}
    \label{eq:k-throw}
c_i=\left\{
\begin{array}{ll}
     -\bar \mu_{2,i}+\bar \mu_3,&\mbox{for }i \in \mathcal{E}(\bar y),  \\
     \bar \sigma_{2,i}+\bar \mu_3, &\mbox{for }i \in a_{10}\left(\bar x,\bar y\right),\\
     \bar \varrho_{2,i}+\bar \mu_3, &\mbox{for }i\in a_{00}\left(\bar x,\bar y\right),\\
     \bar \mu_3,&\mbox{else}.
\end{array}\right.
\end{equation}
Let us assume that the index set $a_{01}(\bar x,\bar y)$ consists of fewer than $n-s$ elements. Then, we have by using the assumption on $\varepsilon$:
\[ \sum_{i=1}^{n} \bar y_i \le (n-s-1)\cdot (1+\varepsilon)\le n-s-1+\frac{n-s-1}{n-s} <n - s,
\]
a contradiction to feasibility. 
Let us assume that the index set $a_{01}(\bar x,\bar y)$ consists of more than $n-s$ elements instead.
Since the components of $c$ are assumed to be pairwise different, we see from (\ref{eq:k-throw}) that there exists at most one element in $a_{01}\left(\bar x,\bar y\right)\backslash \mathcal{E}(\bar y)$ and, consequently, there are at least $n-s$ elements in $\mathcal{E}(\bar y)$. Therefore, we have:
\[
\sum_{i=1}^{n} \bar y_i \geq (n-s)\cdot (1+\varepsilon) > n-s,
\]
which contradicts a). 

c) Due to b), $a_{01}(\bar x,\bar y)$ consists of exactly $n-s$ elements. We conclude as in b) that there is at most one element in $a_{01}\left(\bar x,\bar y\right)\backslash \mathcal{E}(\bar y)$. In view of statement a), $\mathcal{E}(\bar y)$ cannot consist of $n-s$ elements and, thus, must consist of $n-s-1$ elements. Hence, the statement follows immediately.
\end{proof}

\section{Scholtes-type regularization}

Let us now regularize the orthogonality type constraints in $\mathcal{R}$ by using the Scholte's idea, cf. \cite{scholtes:2001}: 
\[
     \begin{array}{rl}
    \displaystyle \mathcal{S}(t): \quad \min_{x,y} \,\, f(x) +c^Ty\quad \mbox{s.\,t.} 
   &h(x)=0, \quad g(x)\ge 0, \quad\displaystyle \sum_{i=1}^{n} y_i \geq  n - s, \\
   & -t\le x_i y_i \le t, \quad 0 \leq y_i \leq 1+\varepsilon,  \quad 
         i=1, \ldots, n,
   \end{array}
\]
where $t>0$. 
Note that $\mathcal{S}$ from above falls into the scope of nonlinear programming, i.e. just smooth equality and inequality constraints are present. The notation for the sets 
$Q_0(x)$ and $\mathcal{E}(y)$, which were used for $\mathcal{R}$, will be used here again.
Furthermore, we define for a feasible point $\left( x, y\right)$ of $\mathcal{S}$ the index set of vanishing $y$-components as well as the index sets of active relaxed orthogonality type constraints:
\[
\mathcal{N}( y)=\left\{i\, \left\vert\,  y_i=0\right.\right\},
\quad
\mathcal{H}^{\ge}\left(x, y\right)=\left\{i\,\left\vert \,x_i  y_i=-t\right.\right\}, \mathcal{H}^{\le}\left( x, y\right)=\left\{i\,\left\vert \, x_i  y_i=t\right.\right\}.
\]
We also eventually use the following index sets:
\[
\mathcal{H}\left(x, y\right)=\mathcal{H}^\geq\left(x, y\right) \cup \mathcal{H}^\leq\left( x, y\right), \quad 
\mathcal{O}\left( x, y\right)=\left(\mathcal{E}\left( y\right)\cup \mathcal{N}\left(y\right)\cup \mathcal{H}\left( x, y\right)\right)^c.
\]

 For the sake of completeness we state the linear independence constraint qualification for the nonlinear programming problem $\mathcal{S}$.

\begin{definition}[LICQ]
We say that a feasible point $(x, y)$ of $\mathcal{S}$ satisfies the linear independence constraint qualification (LICQ) if the following vectors are linearly independent:
\[
\begin{array}{c}
      \begin{pmatrix}
\nabla h_p(x)\\0
\end{pmatrix}, p \in P, \quad 
\begin{pmatrix}
\nabla g_q(x)\\0
\end{pmatrix}, q \in Q_0(x), \quad  
\begin{pmatrix}
0\\
e_i
\end{pmatrix},i\in \mathcal{E}(y),
\\ \begin{pmatrix}
0\\
e
\end{pmatrix}
\mbox{ if } \sum\limits_{i=1}^{n} y_i = n - s, \quad
\begin{pmatrix}
y_i e_i\\
x_i e_i
\end{pmatrix}, i \in \mathcal{H}\left(x,y\right), \quad 
\begin{pmatrix}
0\\
e_i
\end{pmatrix}, i \in \mathcal{N}(y).
\end{array}
\]
\end{definition}

Let us relate MPOC-LICQ for $\mathcal{R}$ with LICQ for $\mathcal{S}$. 

\begin{theorem}[MPOC-LICQ vs. LICQ]
\label{thm:mpoc-licq}
Let a feasible point $(\bar x, \bar y)$ of $\mathcal{R}$ fulfill MPOC-LICQ. Then, LICQ holds at all feasible points $(x,y)$ of $\mathcal{S}$ for all sufficiently small $t$, whenever they are sufficiently close to $(\bar x,\bar y)$. 
\end{theorem}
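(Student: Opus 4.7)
The plan is to argue by contradiction and compactness. Suppose the claim fails: there exist sequences $t_n\downarrow 0$ and feasible points $(x_n,y_n)\to(\bar x,\bar y)$ of $\mathcal{S}(t_n)$ at which LICQ is violated, giving for each $n$ a nontrivial vector of multipliers that sends the active LICQ gradients to zero. The aim is to rescale those multipliers, pass to a convergent subsequence, and thereby obtain a nontrivial vanishing combination of MPOC-LICQ vectors at $(\bar x,\bar y)$, contradicting the hypothesis. As a preliminary I reduce to constant active sets: by continuity one has $Q_0(x_n)\subseteq Q_0(\bar x)$ and $\mathcal{E}(y_n)\subseteq\mathcal{E}(\bar y)$ for large $n$; any $i\in\mathcal{N}(y_n)$ satisfies $\bar y_i=0$ and thus lies in $a_{10}\cup a_{00}$; any $i\in\mathcal{H}(x_n,y_n)$ satisfies $|x_{n,i}y_{n,i}|=t_n\to 0$, so $\bar x_i\bar y_i=0$ and $i\in a_{01}\cup a_{10}\cup a_{00}$; and $\mathcal{H}(x_n,y_n)\cap\mathcal{N}(y_n)=\emptyset$ since $t_n>0$. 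Passing to a subsequence, the four active index sets may be assumed constant (denoted $Q',\mathcal{E}',\mathcal{H}',\mathcal{N}'$), as is the activity of the summation constraint; if it is active along the subsequence, then by continuity the corresponding equality also holds at $(\bar x,\bar y)$, so $\binom{0}{e}$ is in the MPOC-LICQ family there.

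The main difficulty is that for $i\in\mathcal{H}'\cap a_{00}$ the LICQ column $\binom{y_{n,i}e_i}{x_{n,i}e_i}$ vanishes in the limit, so a naive normalization of the multipliers would erase its coefficient. I remedy this by rescaling each $\mathcal{H}'$-column by its Euclidean length $r_{n,i}:=\sqrt{y_{n,i}^{2}+x_{n,i}^{2}}$ and absorbing $r_{n,i}$ into the corresponding multiplier. For $i\in\mathcal{H}'\cap(a_{01}\cup a_{10})$ this factor stays bounded away from zero, tending to $\bar y_i$ or $|\bar x_i|$, so nothing is lost; for $i\in a_{00}$ one has $r_{n,i}\geq\sqrt{2t_n}>0$ (using $y_{n,i}^{2}+x_{n,i}^{2}\geq 2|x_{n,i}y_{n,i}|=2t_n$) and, on a further subsequence, $(y_{n,i},x_{n,i})/r_{n,i}\to(a_i,b_i)$ with $a_i^{2}+b_i^{2}=1$. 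After additionally normalizing the whole multiplier vector to unit norm and extracting once more, I obtain a nonzero limit of multipliers.

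Passing to the limit in the vanishing combination, every summand becomes an MPOC-LICQ vector at $(\bar x,\bar y)$: $\binom{\nabla h_p(\bar x)}{0}$, $\binom{\nabla g_q(\bar x)}{0}$ for $q\in Q'\subseteq Q_0(\bar x)$, $\binom{0}{e_i}$ for $i\in\mathcal{E}'\cup\mathcal{N}'$, possibly $\binom{0}{e}$, scalar multiples of $\binom{e_i}{0}$ for $i\in\mathcal{H}'\cap a_{01}$, scalar multiples of $\binom{0}{e_i}$ for $i\in\mathcal{H}'\cap a_{10}$, and, for $i\in\mathcal{H}'\cap a_{00}$, combinations $a_i\binom{e_i}{0}+b_i\binom{0}{e_i}$ --- all vectors present in the MPOC-LICQ family. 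MPOC-LICQ then forces every coefficient to vanish; the only subtlety is the $a_{00}$ case, where independence of $\binom{e_i}{0}$ and $\binom{0}{e_i}$ yields $\tilde\eta_i a_i=\tilde\eta_i b_i=0$, and $(a_i,b_i)\neq 0$ forces $\tilde\eta_i=0$. This contradicts unit normalization of the limit multipliers, completing the proof. The crux throughout is precisely the $a_{00}$ case; once the $\mathcal{H}'$-columns are rescaled by $r_{n,i}$, what remains is a standard continuity-and-compactness limit.
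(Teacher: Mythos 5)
Your proposal is correct and follows essentially the same route as the paper: argue by contradiction, observe that the active sets of $\mathcal{S}$ near $(\bar x,\bar y)$ embed into the MPOC index sets with $\mathcal{H}\cap\mathcal{N}=\emptyset$, and handle the degenerating columns $\bigl(y_ie_i^T,x_ie_i^T\bigr)^T$ for $i\in a_{00}$ by a rescaling that preserves their coefficient in the limit. The only cosmetic difference is that you normalize these columns by their Euclidean length and pass to a unit-norm limit of multipliers, whereas the paper splits each such column into $y_i^t\bigl(e_i^T,0\bigr)^T+x_i^t\bigl(0,e_i^T\bigr)^T$ and invokes linear independence of the perturbed family at $(x^t,y^t)$ itself; both devices accomplish the same thing.
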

\setlength{\abovedisplayskip}{0pt}
\begin{proof} 
Let us contrarily assume that there exists a sequence of feasible points $\left(x^t,y^t\right)$ of $\mathcal{S}$ violating LICQ, which converges to $(\bar x, \bar y)$ for $t \rightarrow 0$. Additionally, suppose that along some subsequence, which we index by $t$ again, it holds $\sum\limits_{i=1}^{n} y^t_i = n - s$. Then, we have 
$\sum\limits_{i=1}^{n} \bar y_i = n - s$. Due to MPOC-LICQ at $\left(\bar x,\bar y\right)$ as well as continuity of $\nabla h$ and $\nabla g$, we have that for $t$ sufficiently small all multipliers $\bar \lambda^t,\bar \mu^t,\bar \sigma^t,\bar \varrho^t$ in the following equation vanish:

\begin{equation}
    \label{eq:MPOCLICQxt}
   \begin{array}{rcl}
  \begin{pmatrix}
   0\\
  0
  \end{pmatrix}&=& \displaystyle\sum\limits_{p \in P}\bar \lambda^t_p 
  \begin{pmatrix}
  \nabla h_p\left(x^t\right)\\
  0
  \end{pmatrix}+
    \sum\limits_{q \in Q_0(\bar x)}\bar \mu^t_{1,q} 
    \begin{pmatrix}
    \nabla g_q\left(x^t\right)\\
    0
    \end{pmatrix}-
    \sum\limits_{i\in\mathcal{E}(\bar y)} \bar \mu^t_{2,i} \begin{pmatrix}
    0\\
    e_i
\end{pmatrix}\\
    && \displaystyle +
\bar \mu^t_3 \begin{pmatrix}
0\\
e
\end{pmatrix}+\sum\limits_{i \in a_{01}\left(\bar x,\bar y\right)} \bar \sigma^t_{1,i}
    \begin{pmatrix}
    y_{i}^te_{ i}\\
    x_{i}^te_{ i}
    \end{pmatrix}
    +\sum\limits_{i \in a_{10}\left(\bar x,\bar y \right)} \bar \sigma^t_{2,i}    \begin{pmatrix}
    y_{i}^te_{ i}\\
    x_{i}^te_{ i}
    \end{pmatrix}\\
    &&\displaystyle+\sum\limits_{i \in a_{00}\left(\bar x, \bar y\right)} \left(\bar \varrho^t_{1,i}
    \begin{pmatrix}
   e_{ i}\\
    0
    \end{pmatrix}
   +\bar \varrho^t_{2,i}
    \begin{pmatrix}
    0\\
    e_{ i}
    \end{pmatrix}\right)
    . \end{array}
\end{equation}
Moreover, due to the violation of LICQ at $\left(x^t,y^t\right)$, there exist multipliers $\lambda^t,\mu^t,\eta^t,\nu^t$, not all vanishing, with
\[
   \begin{array}{rcl}
  \begin{pmatrix}
  0\\
   0
  \end{pmatrix}&=& \displaystyle \sum\limits_{p \in P} \lambda^t_p 
  \begin{pmatrix}
  \nabla h_p( x^t)\\
  0
  \end{pmatrix}+
    \sum\limits_{q \in Q_0( x^t)} \mu^t_{1,q} 
    \begin{pmatrix}
    \nabla g_q( x^t)\\
    0
    \end{pmatrix}+
    \sum\limits_{i\in\mathcal{E}( y^t)}  \mu^t_{2,i} \begin{pmatrix}
    0\\
     e_i
\end{pmatrix}\\ 
&& \displaystyle+
 \mu^t_3 \begin{pmatrix}
0\\
e
\end{pmatrix} 
+ \sum\limits_{i\in\mathcal{H}( x^t,  y^t)}  \eta^t_{i} \begin{pmatrix}
    y^t_ie_i\\
   x^t_ie_i
\end{pmatrix}+
    \sum\limits_{i\in\mathcal{N}( y^t)}  \nu^t_{i} \begin{pmatrix}
    0\\
    e_i
\end{pmatrix}.
\end{array}
\]
For $t$ sufficiently small we have
$Q_0\left(x^t\right) \subset Q_0(\bar x)$ and 
$\mathcal{E}\left(y^t\right) \subset \mathcal{E}\left(\bar y\right)$. In addition, it holds $\mathcal{H}\left(x^t,y^t\right) \subset a_{01}(\bar x,\bar y) \cup a_{10}(\bar x,\bar y) \cup a_{00}(\bar x,\bar y)$ and $\mathcal{N}\left(y^t\right) \subset a_{10}(\bar x,\bar y) \cup a_{00}(\bar x,\bar y)$. By setting some $\mu$-multipliers to be zero if needed, we equivalently obtain:
\[
   \begin{array}{rcl}
  \begin{pmatrix}
  0\\
   0
  \end{pmatrix}&=& \displaystyle \sum\limits_{p \in P}  \lambda^t_p 
  \begin{pmatrix}
  \nabla h_p( x^t)\\
  0
  \end{pmatrix}+
    \sum\limits_{q \in Q_0(\bar x)}  \mu^t_{1,q} 
    \begin{pmatrix}
    \nabla g_q( x^t)\\
    0
    \end{pmatrix}+
    \sum\limits_{i\in\mathcal{E}(\bar y)}   \mu^t_{2,i} \begin{pmatrix}
    0\\
     e_i
\end{pmatrix} \\ 
    && \displaystyle+
 \mu^t_3 \begin{pmatrix}
0\\
e
\end{pmatrix}
+ \sum\limits_{i\in\mathcal{H}( x^t,  y^t)\cap a_{01}\left(\bar x,\bar y\right)}   \eta^t_{i} \begin{pmatrix}
    y_{i}^te_{ i}\\
    x_{i}^te_{ i}
\end{pmatrix}
+ \sum\limits_{{i}\in\mathcal{H}( x^t,  y^t)\cap a_{10}\left(\bar x,\bar y\right)}   \eta^t_{i} \begin{pmatrix}
    y_{i}^te_{ i}\\
    x_{i}^te_{ i}
\end{pmatrix}
\\ 
&&+\displaystyle \sum\limits_{i\in\mathcal{H}( x^t,  y^t)\cap a_{00}\left(\bar x,\bar y\right)}   \eta^t_{i} \begin{pmatrix}
     y^t_{i}e_{i}\\
   x^t_{i}e_{i}
\end{pmatrix}\\ 
&&+
    \displaystyle\sum\limits_{i\in\mathcal{N}( y^t)\cap a_{10}\left(\bar x,\bar y\right)}   \nu^t_{i} \begin{pmatrix}
    0\\
    e_{i}
\end{pmatrix}
+
    \sum\limits_{i\in\mathcal{N}( y^t)\cap a_{00}\left(\bar x,\bar y\right)}   \nu^t_{i} \begin{pmatrix}
    0\\
    e_{i}
\end{pmatrix}.
\end{array}
\]
This, however, implies that not all multipliers in the following equation vanish:
\[
   \begin{array}{rcl}
  \begin{pmatrix}
  0\\
   0
  \end{pmatrix}&=& \displaystyle \sum\limits_{p \in P} \hat \lambda^t_p 
  \begin{pmatrix}
  \nabla h_p(x^t)\\
  0
  \end{pmatrix}+
    \sum\limits_{q \in Q_0(\bar x)} \hat \mu^t_{1,q} 
    \begin{pmatrix}
    \nabla g_q( x^t)\\
    0
    \end{pmatrix}+
    \sum\limits_{i\in\mathcal{E}(\bar y)}  \hat \mu^t_{2,i} \begin{pmatrix}
    0\\
     e_i
\end{pmatrix} \\
    && \displaystyle+
 \hat \mu^t_3 \begin{pmatrix}
0\\
e
\end{pmatrix}
+ \sum\limits_{i\in\mathcal{H}( x^t,  y^t)\cap a_{01}\left(\bar x,\bar y\right)}  \hat \eta^t_{i} \begin{pmatrix}
    y_{i}^te_{ i}\\
    x_{i}^te_{ i}
\end{pmatrix}
+ \sum\limits_{{i}\in\mathcal{H}( x^t,  y^t)\cap a_{10}\left(\bar x,\bar y\right)}  \hat \eta^t_{i} \begin{pmatrix}
    y_{i}^te_{ i}\\
    x_{i}^te_{ i}
\end{pmatrix}
\\ 
&&+\displaystyle\sum\limits_{i\in\mathcal{H}( x^t,  y^t)\cap a_{00}\left(\bar x,\bar y\right)}  \hat \eta^t_{1,i} \begin{pmatrix}
     e_{i}\\
   0
\end{pmatrix}
+\sum\limits_{i\in\mathcal{H}( x^t,  y^t)\cap a_{00}\left(\bar x,\bar y\right)}  \hat \eta^t_{2,i} \begin{pmatrix}
     0\\
   e_{i}
\end{pmatrix}
\\ 
&&+
   \displaystyle \sum\limits_{i\in\mathcal{N}( y^t)\cap a_{10}\left(\bar x,\bar y\right)}  \hat\nu^t_{i} \begin{pmatrix}
 y_{i}^te_{ i}\\
    x_{i}^te_{ i}
\end{pmatrix}
+
    \sum\limits_{i\in\mathcal{N}( y^t)\cap a_{00}\left(\bar x,\bar y\right)}  \hat\nu^t_{i} \begin{pmatrix}
    0\\
    e_{i}
\end{pmatrix}.
\end{array}
\]
A contradiction to (\ref{eq:MPOCLICQxt}) follows by taking into account that $ \mathcal{H}( x^t,  y^t) \cap \mathcal{N}( y^t) = \emptyset$.
If instead we suppose that there is no subsequence with $\sum\limits_{i=1}^{n} y^t_i = n - s$, then we can consider a subsequence with  $\sum\limits_{i=1}^{n} y^t_i > n - s$. By following a similar argumentation, we produce a contradiction to (\ref{eq:MPOCLICQxt}) again. \end{proof}

Next, we give the definitions of a (nondegenerate) Karush-Kuhn-Tucker point of $\mathcal{S}$ and of its quadratic index as it is meanwhile standard in nonlinear programming, see e.g. \cite{Jongen:2000}.
 
\begin{definition}[Karush-Kuhn-Tucker point]
    A feasible point $(x, y)$ of $\mathcal{S}$ is called Kurush-Kuhn-Tucker if there exist multipliers 
\[
\lambda_p, p \in P,
 \mu_{1,q}, q\in Q_0( x), \mu_{2,i},i \in \mathcal{E}( y), \mu_3,
\eta^\ge_i, \eta^\le_i, \nu_i, i\in\left\{1,\ldots,n\right\},
\]
such that the following conditions hold:
\begin{equation}
   \label{eq:kkt-1} 
   \begin{array}{rcl}
  \begin{pmatrix}
   \nabla f( x)\\
   c
  \end{pmatrix}&=& \displaystyle \sum\limits_{p \in P} \lambda_p 
  \begin{pmatrix}
  \nabla h_p( x)\\
  0
  \end{pmatrix}+
    \sum\limits_{q \in Q_0( x)} \mu_{1,q} 
    \begin{pmatrix}
    \nabla g_q( x)\\
    0
    \end{pmatrix}
   - \sum\limits_{i\in\mathcal{E}( y)}  \mu_{2,i} \begin{pmatrix}
    0\\
     e_i
\end{pmatrix}+
 \mu_3 \begin{pmatrix}
0\\
e
\end{pmatrix} \\ 
    && \displaystyle
+ \sum\limits_{i\in\mathcal{H}^\ge( x,  y)}  \eta^\ge_{i} \begin{pmatrix}
    y_ie_i\\
   x_ie_i
\end{pmatrix}
-\sum\limits_{i\in\mathcal{H}^\le( x,  y)}  \eta^\le_{i} \begin{pmatrix}
    y_ie_i\\
    x_ie_i
\end{pmatrix}
+
    \sum\limits_{i\in\mathcal{N}( y)}  \nu_{i} \begin{pmatrix}
    0\\
    e_i
\end{pmatrix},
\end{array}
\end{equation}
\begin{equation}
   \label{eq:kkt-2}  \mu_{1,q} \ge 0,q\in Q_0\left( x\right),
    \mu_{2,i} \ge 0, i\in \mathcal{E}(y),
    \mu_3\ge 0, \mu_3\left(
   \sum\limits_{i=1}^{n}  y_i -(n-s)\right)=0,
\end{equation}
\begin{equation}
   \label{eq:kkt-3} 
  \eta^\ge_i\ge 0,i \in \mathcal{H}^\ge( x,  y), \eta^\le_i\ge 0,i \in \mathcal{H}^\le( x,  y), \nu_i\ge 0, i \in \mathcal{N}( y).
\end{equation}
\end{definition}
We again define the Lagrange function as

\[
        \begin{array}{rcl}
        L^\mathcal{S}(x,y)&=& \displaystyle
   f(x)+c^Ty
       - \sum\limits_{p \in P} \lambda_p 
  h_p(x)-
    \sum\limits_{q \in Q_0(x)} \mu_{1,q} 
    g_q(x)
      \\&& \displaystyle +\sum\limits_{i\in\mathcal{E}(y)}
   \mu_{2,i}
    \left(y_i - (1+\varepsilon)\right)
 -\mu_3\left(\sum\limits_{i=1}^n y_i - (n-s)\right)  \\ 
    && \displaystyle
- \sum\limits_{i\in\mathcal{H}^\ge( x,  y)}  \eta^\ge_{i}
    \left(x_iy_i +t \right)
+\sum\limits_{i\in\mathcal{H}^\le( x,  y)}  \eta^\le_{i}
    \left(x_iy_i-t\right)
-
    \sum\limits_{i\in\mathcal{N}( y)}  \nu_{i}
    y_i.
\end{array}
\]
The tangent space is given by

\[
    \mathcal{T}^\mathcal{S}_{(x,y)} =\left\{
\xi \in \R^{2n}\,\left\vert\, \begin{array}{l} \begin{pmatrix}
Dh_p( x),0\end{pmatrix} \xi=0, p \in P, 
\begin{pmatrix}
Dg_q( x),0\end{pmatrix}\xi=0,q \in Q_0( x),\\ \displaystyle
\begin{pmatrix}
0,e_i
\end{pmatrix}\xi=0, i\in \mathcal{E}(y),
\begin{pmatrix}
0,e
\end{pmatrix}\xi=0 \mbox{ if } \sum_{i=1}^{n} y_i = n - s,\\
\begin{pmatrix}
y_ie_i,x_ie_i
\end{pmatrix}\xi=0, i \in \mathcal{H}\left(x,y\right), 
\begin{pmatrix}
    0,e_i
\end{pmatrix}\xi=0, i \in \mathcal{N}(y)
\end{array}
\right.\right\}.
\]

\begin{definition}[Nondegenerate Karush-Kuhn-Tucker point]
A Karush-Kuhn-Tucker point $(x, y)$ of $\mathcal{S}$ with multipliers $(\lambda, \mu, \eta, \nu)$
is called nondegenerate if 

ND1: LICQ holds at $( x, y)$,

ND2: $ \mu_{1,q}>0$, $q \in Q_0\left(x\right)$, $ \mu_{2,i}>0$, $i \in \mathcal{E}\left(y\right)$,
    $\eta^\ge_i>0$, $i \in \mathcal{H}^\ge(x,y)$,
    $\eta^\le_i>0$, $i \in \mathcal{H}^\le(x,y)$,
    $\nu_i >0$, $i \in \mathcal{N}(y)$, and $ \mu_3>0$ if  $\sum\limits_{i=1}^{n} y_i =n-s$,

ND3: the matrix $D^2 L^\mathcal{S}(x,y) \restriction_{\mathcal{T}^\mathcal{S}_{(x,y)}}$ is nonsingular.
\end{definition}

\begin{definition}[Quadratic index]
Let $(x,y)$ be a Karush-Kuhn-Tucker point of $\mathcal{S}$ with unique multipliers $(\lambda, \mu, \eta, \nu)$.
The  number of negative eigenvalues of the matrix $D^2 L^\mathcal{S}(x,y)\restriction_{\mathcal{T}^\mathcal{S}_{(x,y)}}$ is called its quadratic index ($QI$).
\end{definition}

Note that ND1-ND3 are usual assumptions in nonlinear programming. ND1 refers to the linear independence constraint qualification, ND2 means the strict complementarity, and ND3 describes the second-order regularity. For the index of a nondegenerate Karush-Kuhn-Tucker point just the quadratic part is essential.

Next Lemma \ref{lem:EandH} examines the structure of $y$-components of a Karush-Kuhn-Tucker point of $\mathcal{S}$.

\begin{lemma}[Auxiliary $y$-variables in $\mathcal{S}$]
\label{lem:EandH}
Let $(x, y)$ be a Karush-Kuhn-Tucker point of $\mathcal{S}$. Then, it holds:

a) the summation inequality constraint is active, i.\,e. $\sum\limits_{i=1}^n  y_i=n-s$,

b) the index set $\mathcal{E}( y) \cup \mathcal{H}(x, y)$ consists of at least $n-s-1$ elements,
and the index set $\mathcal{N}( y)$ consists of at most $s$ elements. Additionally, there is at most one index, that does not belong to any of these sets, i.\,e. $\left\vert \mathcal{O}\left( x,  y\right)\right\vert \le 1$.
\end{lemma}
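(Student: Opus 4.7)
My proof plan mirrors that of Lemma \ref{lem:a01} for $\mathcal{R}$: I would read off the $(n+i)$-th row of the stationarity condition (\ref{eq:kkt-1}) and exploit that the components of $c$ are positive and pairwise different, together with the bound $\varepsilon \le \frac{1}{n-s}$.

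As a first step I record a sign observation: for $i \in \mathcal{H}(x,y)$ the equality $x_iy_i=\pm t\ne 0$ combined with $y_i\ge 0$ forces $y_i>0$, and hence $x_i<0$ for $i\in\mathcal{H}^\ge(x,y)$ respectively $x_i>0$ for $i\in\mathcal{H}^\le(x,y)$. Unpacking the $y_i$-component of (\ref{eq:kkt-1}) then yields, in each case for the membership of $i$ in $\mathcal{E}(y),\mathcal{N}(y),\mathcal{H}^\ge,\mathcal{H}^\le,\mathcal{O}(x,y)$ (noting that $\mathcal{N}$ and $\mathcal{O}$ are disjoint from each other and from $\mathcal{H}$, while $\mathcal{E}$ and $\mathcal{H}$ may overlap), an identity of the form
\[
c_i \;=\; \mu_3 + \nu_i\mathbf{1}_{\mathcal{N}(y)}(i) + \bigl(\text{non-positive terms}\bigr),
\]
where the non-positive terms collect $-\mu_{2,i}$ (whenever $i\in\mathcal{E}$) and $\eta_i^\ge x_i$ or $-\eta_i^\le x_i$ (whenever $i\in\mathcal{H}$), all $\le 0$ by the signs of $x_i$ just recorded together with KKT multiplier non-negativity.

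For statement (a), I would argue by contradiction: if $\sum_i y_i>n-s$, complementarity gives $\mu_3=0$, so for every $i\notin\mathcal{N}(y)$ the right-hand side of the row equation is $\le 0$, contradicting $c_i>0$; thus $\mathcal{N}(y)=\{1,\ldots,n\}$, i.e.\ $y=0$, which violates feasibility $\sum_i y_i\ge n-s\ge 1$. Hence $\sum_i y_i=n-s$, and the very same argument rules out $\mu_3=0$, so in fact $\mu_3>0$. For the first two bounds in (b), I combine (a) with $y_i\le 1+\varepsilon\le 1+\frac{1}{n-s}$: if $|\mathcal{N}(y)|\ge s+1$ then
\[
\sum_{i=1}^{n} y_i \;\le\; (n-s-1)\!\left(1+\tfrac{1}{n-s}\right) \;=\; (n-s)-\tfrac{1}{n-s} \;<\; n-s,
\]
contradicting (a); so $|\mathcal{N}(y)|\le s$. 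For $|\mathcal{O}(x,y)|\le 1$ I revisit the row equation once more: for $i\in\mathcal{O}(x,y)$ only $c_i=\mu_3$ survives, and since the components of $c$ are pairwise distinct at most one such $i$ can exist. The closing bound $|\mathcal{E}(y)\cup\mathcal{H}(x,y)|=n-|\mathcal{N}(y)|-|\mathcal{O}(x,y)|\ge n-s-1$ then follows from the set partition.

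The main obstacle is the case bookkeeping of the row equation, in particular the allowed overlap $\mathcal{E}\cap\mathcal{H}$; but because each contribution $-\mu_{2,i}$, $\eta_i^\ge x_i$, $-\eta_i^\le x_i$ is individually non-positive, the overlap cases only sharpen the inequality and require no separate treatment. No other serious difficulty is anticipated.
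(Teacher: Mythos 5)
Your proof is correct and follows essentially the same route as the paper: reading off the $(n+i)$-th row of \eqref{eq:kkt-1}, using $c>0$ with the sign of $x_i$ on $\mathcal{H}^{\ge}$ and $\mathcal{H}^{\le}$ to force $\mu_3>0$ and $\mathcal{N}(y)=\{1,\ldots,n\}$ in the contradiction for a), the pairwise distinctness of $c$ for $\vert\mathcal{O}(x,y)\vert\le 1$, and the bound $\varepsilon\le\frac{1}{n-s}$ for the cardinality counts. The only (harmless) deviation is that you obtain $\vert\mathcal{E}(y)\cup\mathcal{H}(x,y)\vert\ge n-s-1$ from the partition $n=\vert\mathcal{E}\cup\mathcal{H}\vert+\vert\mathcal{N}\vert+\vert\mathcal{O}\vert$ and the other two bounds, whereas the paper proves it by a separate summation estimate; your version is a slight streamlining.
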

\begin{proof}
a) Let $(x, y)$ be a Karush-Kuhn-Tucker point of $\mathcal{S}$ and $\sum\limits_{i=1}^n  y_i>n-s$. Then, there exist multipliers $(\lambda,  \mu,  \eta,  \nu)$, such that (\ref{eq:kkt-1})--(\ref{eq:kkt-3}) are fulfilled. Since $ \mu_3=0$, we have that the $(n+i)$-th row of (\ref{eq:kkt-1}) reads as
\[
c_i=\left\{
\begin{array}{ll}
     - \mu_{2,i},&\mbox{for }i \in \mathcal{E}( y)\backslash\mathcal{H}( x, y),  \\
    - \mu_{2,i}+ \eta^\ge_i  x_i, &\mbox{for }i \in \mathcal{E}( y) \cap \mathcal{H}^\ge( x, y),\\
    - \mu_{2,i}- \eta^\le_i  x_i, &\mbox{for }i \in \mathcal{E}( y) \cap  \mathcal{H}^\le( x, y),\\
 \eta^\ge_i  x_i, &\mbox{for }i \in \mathcal{H}^\ge( x, y)\backslash \mathcal{E}( y),\\
- \eta^\le_i  x_i, &\mbox{for }i \in \mathcal{H}^\le( x, y)\backslash \mathcal{E}( y),\\
 \nu_i,&\mbox{for }i \in \mathcal{N}( y),\\
     0,&\mbox{else}.
\end{array}\right.
\]
Due to (\ref{eq:kkt-2}), (\ref{eq:kkt-3}), and $c>0$, it must hold that $i\in \mathcal{N}( y)$ for all $i \in \left\{1,\ldots,n\right\}$. This, however, contradicts $\sum\limits_{i=1}^n  y_i>n-s$.

b) 
As in the proof of statement a), we conclude that $ \mu_3>0$ for a Karush-Kuhn-Tucker point $( x, y)$ of $\mathcal{S}$. Hence, the $(n+i)$-th row now reads as
\begin{equation}
\label{eq:kth-rowkkt}    
c_i=\left\{
\begin{array}{ll}
     - \mu_{2,i}+ \mu_3,&\mbox{for }i \in \mathcal{E}( y)\backslash \mathcal{H}( x, y),  \\
    - \mu_{2,i}+ \mu_3+ \eta^\ge_i  x_i, &\mbox{for }i \in \mathcal{E}( y) \cap \mathcal{H}^\ge( x, y),\\
    - \mu_{2,i}+ \mu_3- \eta^\le_i  x_i, &\mbox{for }i \in \mathcal{E}( y) \cap  \mathcal{H}^\le( x, y),\\
 \mu_3+ \eta^\ge_i  x_i, &\mbox{for }i \in \mathcal{H}^\ge( x, y)\backslash \mathcal{E}( y),\\
 \mu_3- \eta^\le_i  x_i, &\mbox{for }i \in \mathcal{H}^\le( x, y)\backslash \mathcal{E}( y),\\
 \mu_3+ \nu_i,&\mbox{for }i \in \mathcal{N}( y),\\
      \mu_3,&\mbox{else}.
\end{array}\right.
\end{equation}
It follows from (\ref{eq:kth-rowkkt}) and  the components of $c$ being pairwise different that there can be at most one element ${\bar i} \in \mathcal{O}\left( x,  y\right)$. 
If $\mathcal{E}( y) \cup \mathcal{H}( x,  y)$ consists of fewer than $n-s-1$ elements, we get:
\[
\sum\limits_{i=1}^n  y_i\le (n-s-2)\cdot (1+\varepsilon)+y_{\bar i}<(n-s-1)\cdot(1+\varepsilon)<n-s,
\]
a contradiction.
Finally, we assume that $\mathcal{N}( y)$ consists of more than $s$ elements.
In this case, there are at most $n-s-1$ nonvanishing components of $y$. Consequently, 
\[
\sum\limits_{i=1}^n  y_i\le (n-s-1)\cdot (1+\varepsilon)<n-s
\]
provides a contradiction.
\end{proof}

We apply the general result on the Scholtes-type regularization of MPOC in our context for the regularized continuous reformulation $\mathcal{R}$, see \cite{laemmel:mpoc}.

\begin{theorem}[Convergence from $\mathcal{S}$ to $\mathcal{R}$, cf. \cite{laemmel:mpoc}]
\label{thm:regul}
Suppose that a sequence of Karush-Kuhn-Tucker points $(x^{t},y^{t})$ of 
$\mathcal{S}$ converges to $\left(\bar x,\bar y\right)$ for $t \rightarrow 0$. If MPOC-LICQ holds at $\left(\bar x,\bar y\right)$, then it is a T-stationary point of $\mathcal{R}$.
\end{theorem}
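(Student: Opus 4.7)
The plan is to pass to the limit in the KKT system of $\mathcal{S}(t)$ at $(x^t, y^t)$ and recast it as a T-stationarity certificate for $\mathcal{R}$ at $(\bar x, \bar y)$. Feasibility of $(\bar x, \bar y)$ in $\mathcal{R}$ is immediate: the equality, inequality, summation and box constraints pass to the limit by continuity, while the orthogonality $\bar x_i\bar y_i = 0$ follows by squeezing from $-t \le x_i^t y_i^t \le t$ as $t \to 0$. After extracting a subsequence, the active sets $Q_0(x^t)$, $\mathcal{E}(y^t)$, $\mathcal{H}^\ge(x^t,y^t)$, $\mathcal{H}^\le(x^t,y^t)$, $\mathcal{N}(y^t)$ may be assumed constant in $t$, and by Theorem \ref{thm:mpoc-licq} LICQ holds at each $(x^t,y^t)$, so the KKT multipliers $\lambda^t, \mu^t, \eta^t, \nu^t$ are uniquely determined. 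A direct inspection of the possible active-set combinations shows, for instance, that any index $i$ with $x_i^t y_i^t = \pm t$ and $\bar y_i > 0$ must satisfy $\bar x_i = 0$ and therefore lies in $a_{01}(\bar x, \bar y)$, an index $i \in \mathcal{N}(y^t)$ with $\bar x_i \neq 0$ lies in $a_{10}(\bar x, \bar y)$, and so on.

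Next I would aggregate the $\eta$-multipliers to absorb the Scholtes scaling. Setting $\tilde\eta_{1,i}^t := \pm\eta_i^t y_i^t$ and $\tilde\eta_{2,i}^t := \pm\eta_i^t x_i^t$ for $i \in \mathcal{H}$ (signs chosen according to $\mathcal{H}^\ge$ versus $\mathcal{H}^\le$), the KKT equation (\ref{eq:kkt-1}) rearranges into a linear combination of gradient vectors that converge precisely to the MPOC-LICQ vectors at $(\bar x, \bar y)$. Arguing by contradiction, suppose the aggregated multiplier sequence $(\lambda^t, \mu^t, \tilde\eta^t, \nu^t)$ is unbounded; normalizing by its sup-norm, extracting a convergent subsequence and passing to the limit produces a nontrivial linear combination of MPOC-LICQ vectors equal to zero, contradicting MPOC-LICQ at $(\bar x, \bar y)$. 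With boundedness secured, I would define $\bar\lambda_p, \bar\mu_{1,q}, \bar\mu_{2,i}, \bar\mu_3$ as limits of their $t$-counterparts; for $i \in a_{01}$ set $\bar\sigma_{1,i} := \lim \tilde\eta_{1,i}^t$ (note $\tilde\eta_{2,i}^t \to 0$ since $x_i^t \to 0$); for $i \in a_{10}$ set $\bar\sigma_{2,i}$ from $\tilde\eta_{2,i}^t$ or $\nu_i^t$ depending on which set contains $i$; and for $i \in a_{00}$ record both limits as $\bar\varrho_{1,i}, \bar\varrho_{2,i}$.

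Conditions (\ref{eq:tstat-1}) and (\ref{eq:tstat-2}) then follow by construction and by preservation of nonnegativity and complementarity under limits. The critical check is (\ref{eq:tstat-3}): for each $i \in a_{00}$ I would run a case analysis. If $i \in \mathcal{N}(y^t)$, then $y_i^t = 0$ excludes $i \in \mathcal{H}$, so the $x$-row has no contribution at $e_i$ and $\bar\varrho_{1,i} = 0$. If $i \in \mathcal{H}^\ge(x^t,y^t)$ (i.e. $x_i^t y_i^t = -t$), then $y_i^t > 0$ forces $x_i^t < 0$, whence $\bar\varrho_{2,i} = \lim \eta_i^{\ge,t} x_i^t \le 0$. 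The case $i \in \mathcal{H}^\le(x^t,y^t)$ is symmetric and again yields $\bar\varrho_{2,i} \le 0$. The lone index possibly in $\mathcal{O}(x^t,y^t)$, admissible by Lemma \ref{lem:EandH}, contributes nothing in the limit, so both $\bar\varrho$'s vanish. In every case (\ref{eq:tstat-3}) is satisfied.

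The main obstacle is the aggregation-and-boundedness step: for biactive indices $i \in a_{00}$ the raw multiplier $\eta_i^t$ may well diverge while only the products $\eta_i^t x_i^t$ and $\eta_i^t y_i^t$ stay bounded. The normalization contradiction must therefore be performed on the aggregated quantities, and one must carefully check that the emerging gradient vectors align with the MPOC-LICQ vectors at $(\bar x,\bar y)$, which is exactly the payoff of first stabilizing the active-set structure.
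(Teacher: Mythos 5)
The paper itself gives no proof of Theorem \ref{thm:regul}; it is imported from \cite{laemmel:mpoc} as an instance of the general MPOC result, so there is no in-text argument to compare against. Your outline is the standard proof that lies behind the citation, and it is essentially correct: feasibility of the limit by squeezing $-t\le x_i^ty_i^t\le t$, stabilization of the active sets along a subsequence, aggregation of the Scholtes multipliers into the products $\left(\eta_i^{\ge,t}-\eta_i^{\le,t}\right)y_i^t$ and $\left(\eta_i^{\ge,t}-\eta_i^{\le,t}\right)x_i^t$, boundedness via normalization against MPOC-LICQ, and the sign bookkeeping that delivers (\ref{eq:tstat-2}) and (\ref{eq:tstat-3}); as a by-product you also recover exactly the multiplier convergence recorded in Remark \ref{rem:multipliers}. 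The one step you should make explicit is inside the normalization contradiction: for $i\in a_{01}\left(\bar x,\bar y\right)\setminus\mathcal{E}\left(\bar y\right)$ the aggregated coefficient $\left(\eta_i^{\ge,t}-\eta_i^{\le,t}\right)x_i^t$ multiplies $\left(0,e_i\right)$, which is \emph{not} one of the MPOC-LICQ vectors, and symmetrically for $i\in a_{10}\left(\bar x,\bar y\right)$ the coefficient $\left(\eta_i^{\ge,t}-\eta_i^{\le,t}\right)y_i^t$ multiplies $\left(e_i,0\right)$. These terms must be shown to drop out of the limiting combination; this follows from the identities $\left(\eta_i^{\ge,t}-\eta_i^{\le,t}\right)x_i^t=\left(\eta_i^{\ge,t}-\eta_i^{\le,t}\right)y_i^t\cdot\frac{x_i^t}{y_i^t}$ with $\frac{x_i^t}{y_i^t}\to0$ on $a_{01}$, respectively $\frac{y_i^t}{x_i^t}\to0$ on $a_{10}$, so that after dividing by the sup-norm of the aggregated multiplier vector these coefficients tend to zero and the normalized limit is a genuine vanishing combination of MPOC-LICQ vectors. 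Your closing remark that the argument must be run on the aggregated quantities (since $\eta_i^t$ itself may blow up on $a_{00}$) identifies precisely the right difficulty; only the alignment check just described is left implicit. The case analysis for (\ref{eq:tstat-3}) on $a_{00}\left(\bar x,\bar y\right)$ is correct as stated.
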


From the proof of Theorem \ref{thm:regul} in \cite{laemmel:mpoc} also the convergence of the corresponding multipliers can be deduced.

\begin{remark}[Convergence of multipliers]
\label{rem:multipliers}
 Let $\left(\lambda^t,\mu^t,\eta^t,\nu^t\right)$ be the multipliers of the Karush-Kuhn-Tucker points $\left(x^{t},y^{t}\right)$ of $\mathcal{S}$ and $\left(\bar \lambda,\bar \mu,\bar \sigma, \bar \varrho\right)$ of the T-stationary point $\left(\bar x, \bar y\right)$ of $\mathcal{R}$ as in Theorem \ref{thm:regul}. Due to MPOC-LICQ at $\left(\bar x, \bar y\right)$, we have: 
 
a) $\lim\limits_{t \to 0} \lambda^t=\bar \lambda$,
    $\lim\limits_{t \to 0} \mu^t=\bar \mu$,

b) $\lim\limits_{t \to 0} \left(\eta_{i}^{\ge,t}-\eta_{i}^{\le,t}\right)y_{i}^t=\bar \sigma_{1,i}$, $i \in a_{01}\left(\bar x, \bar y\right)$,
 
 c) $\lim\limits_{t \to 0} \nu_{i}^{t}+ \left(\eta_{i}^{\ge,t}-\eta_{i}^{\le,t}\right)x_{i}^t=\bar \sigma_{2,i}$, $i \in a_{10}\left(\bar x, \bar y\right)$,

d) $\lim\limits_{t \to 0} \left(\eta_{i}^{\ge,t}-\eta_{i}^{\le,t}\right)y_{i}^t=\bar \varrho_{1,i}$, $\lim\limits_{t \to 0} \nu_{i}^{t}+ \left(\eta_{i}^{\ge,t}-\eta_{i}^{\le,t}\right)x_{i}^t=\bar \varrho_{2,i}$, $i \in a_{00}\left(\bar x, \bar y\right)$.
\end{remark}

The convergence of nondegenerate Karush-Kuhn-Tucker points of $\mathcal{S}$ does not prevent the limiting T-stationary point of $\mathcal{S}$ from being degenerate. Let us present in Example \ref{ex:ndt2} the failure of NDT2. Examples with the failure of NDT1, NDT3, or NDT4 are not difficult to construct analogously. 

\begin{example}[Failure of NDT2]
\label{ex:ndt2}
We consider the following Scholtes-type regularization $\mathcal{S}$ with $n=2$ and $s=1$:
\[
\begin{array}{rl}
\mathcal{S}: \quad \min\limits_{x,y}& (x_1-1)^2+(x_2-1)^2+c_1y_1+(c_1+\frac{5}{36})y_2\\
\mbox{s.t.}&1+x_1-x_2\ge 0, \\ &y_1+y_2\ge 1, \quad  
-t\le x_i y_i \le t,  \quad 0\leq y_i\le 1+\varepsilon, \quad i=1,2,
\end{array}
\]
as well as the point $(x^t,y^t)=(t,1,1,0)$.We claim that this point is a nondegenerate Karush-Kuhn-Tucker point for $t<\frac{1}{2}-\sqrt{\frac{13}{72}}$.
Indeed, it holds:
\[
\begin{pmatrix}
2t-2\\0\\c_1\\c_1+\frac{5}{36}
\end{pmatrix}
=
\mu_3^t
\begin{pmatrix}
0\\0\\1\\1
\end{pmatrix}
-\eta_1^{\le,t}
\begin{pmatrix}
1\\
0\\
t\\
0
\end{pmatrix}
+\nu_2^t
\begin{pmatrix}
0\\0\\0\\1
\end{pmatrix}
\]
with the positive multipliers 
$\mu_3^t=c_1+2t-2t^2$, $\eta_1^{\le,t}=2-2t$, $\nu_2^t=\frac{5}{36}-2t+2t^2$.
The tangent space  is 
$
 \mathcal{T}^\mathcal{S}_{(x^t,y^t)} =\left\{
\xi \in \R^{4}\,\left\vert\,\xi_1=\xi_3=\xi_4=0 \right.\right\}$.
The Hessian of the corresponding Lagrange function is 
 \[
 D^2 L^\mathcal{S}(x^t,y^t)=\begin{pmatrix}
 2&0&2-2t&0\\
 0&2&0&0\\
 2-2t&0&0&0\\
 0&0&0&0
 \end{pmatrix}.
 \]
Therefore, it is straightforward to see that 
$D^2 L^\mathcal{S}(x^t,y^t)\restriction_{\mathcal{T}^{\mathcal{S}}_{(x^t,y^t)}}$ is nonsingular.
We conclude that ND1-ND3 are fulfilled at the $(x^t,y^t)$. Moreover, $(x^t,y^t)$ converges to $(\bar x, \bar y)=(0,1,1,0)$ if $t \to 0$.
This point is T-stationary for the corresponding regularized continuous reformulation $\mathcal{R}$ according to Theorem \ref{thm:regul}, since  MPOC-LICQ is fulfilled. Indeed, we obtain the T-stationarity condition
\[
\begin{pmatrix}
-2\\0\\c_1\\c_1+\frac{5}{36}
\end{pmatrix}
=
\bar \mu_{1}\begin{pmatrix}
1\\-1\\0\\0
\end{pmatrix}
+
\bar \mu_3
\begin{pmatrix}
0\\0\\1\\1
\end{pmatrix}
+
\bar \sigma_{1,1}
\begin{pmatrix}
1\\0\\0\\0
\end{pmatrix}
+
\bar \sigma_{2,2}
\begin{pmatrix}
0\\0\\0\\1
\end{pmatrix}
\]
with the unique multipliers 
$\bar \mu_{1}=0, \bar \mu_3=c_1, \bar \sigma_{1,1}=-2, \bar \sigma_{2,2}=\frac{5}{36}$.
However, NDT2 is violated at $(\bar x, \bar y)$.
\qed
\end{example}

Due to Example \ref{ex:ndt2}, we cannot expect that a T-stationary point of $\mathcal{R}$, which is the limit of a sequence of nondegenerate Karush-Kuhn-Tucker points of $\mathcal{S}$, is also nondegenerate. Instead, we intend to examine its type if assuming nondegeneracy.
Next Lemma \ref{lem:QandE} provides some valuable insights into the relations between active index sets while doing so.

\begin{lemma}[Active index sets]
\label{lem:QandE}
Suppose a sequence of Karush-Kuhn-Tucker points $\left(x^{t},y^{t}\right)$ of $\mathcal{S}(t)$ converges to $\left(\bar x,\bar y\right)$ for $t \to 0$. Moreover, let $(\bar x,\bar y)$ be a nondegenerate T-stationary point of $\mathcal{R}$. Then, for all sufficiently small $t$ it holds:

a) $Q_0\left(\bar x\right)=Q_0\left(x^{t}\right)$,

b) $\mathcal{E}\left(\bar y\right)=\mathcal{E}\left(y^{t}\right) $,

c) $a_{00}\left(\bar x, \bar y\right) \subset \mathcal{H}\left(x^t,y^t\right)$,

d) $\mathcal{N}\left(y^t\right)\subset
a_{10}\left(\bar x,\bar y\right)\subset 
\mathcal{N}\left(y^t\right) \cup \mathcal{H}\left(x^t,y^t\right)$.    

\end{lemma}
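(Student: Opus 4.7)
The strategy is to combine Remark \ref{rem:multipliers}, which supplies convergence of the $\mathcal{S}(t)$-multipliers to those of the limiting T-stationary point of $\mathcal{R}$, with the nondegeneracy conditions NDT2 and NDT3 and the structural information on $\bar y$ obtained in Lemma \ref{lem:a01}. Parts (a), (b), and (c) will reduce to short bookkeeping arguments, whereas the second inclusion in (d) is the only step that genuinely uses the pin-down of $\bar\mu_3$ coming from the proof of Lemma \ref{lem:a01}.

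For (a) and (b), one direction is immediate from continuity: $g_q(x^t)\to g_q(\bar x)$ and $y^t_i\to \bar y_i$ give $Q_0(x^t)\subseteq Q_0(\bar x)$ and $\mathcal{E}(y^t)\subseteq\mathcal{E}(\bar y)$ for all small $t$. For the reverse inclusions I pick $q\in Q_0(\bar x)$ or $i\in\mathcal{E}(\bar y)$; NDT2 makes the corresponding $\mathcal{R}$-multiplier strictly positive, and Remark \ref{rem:multipliers}(a) forces the matching $\mathcal{S}(t)$-multiplier to be eventually positive, which by complementary slackness for $\mathcal{S}(t)$ means the corresponding constraint must be active. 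For (c), I fix $i\in a_{00}(\bar x,\bar y)$; NDT3 yields $\bar\varrho_{1,i}\ne 0$ and Remark \ref{rem:multipliers}(d) gives $(\eta^{\ge,t}_i-\eta^{\le,t}_i)\,y^t_i\to\bar\varrho_{1,i}\ne 0$. If $i\notin\mathcal{H}(x^t,y^t)$ along a subsequence, then complementary slackness for $\mathcal{S}(t)$ kills both $\eta^{\ge,t}_i$ and $\eta^{\le,t}_i$, so the product is identically zero along that subsequence, contradicting the nonzero limit.

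Part (d) splits into two inclusions. The first, $\mathcal{N}(y^t)\subseteq a_{10}(\bar x,\bar y)$, is short: $y^t_i=0$ implies $\bar y_i=0$, placing $i$ in $a_{10}\cup a_{00}$, while the identity $x^t_iy^t_i=0\ne\pm t$ shows $\mathcal{N}(y^t)\cap\mathcal{H}(x^t,y^t)=\emptyset$, so (c) rules out $a_{00}$. The delicate second inclusion is $a_{10}(\bar x,\bar y)\subseteq\mathcal{N}(y^t)\cup\mathcal{H}(x^t,y^t)$. Suppose for contradiction that some $i\in a_{10}$ lies outside both sets along a subsequence $t\to 0$. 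Then $y^t_i>0$ and $|x^t_iy^t_i|<t$; since $y^t_i\to\bar y_i=0$ excludes $i\in\mathcal{E}(y^t)$ for small $t$, one has $i\in\mathcal{O}(x^t,y^t)$. Equation (\ref{eq:kth-rowkkt}) from the proof of Lemma \ref{lem:EandH} then collapses to $c_i=\mu^t_3$, and Remark \ref{rem:multipliers}(a) lets me pass to the limit to get $c_i=\bar\mu_3$. But Lemma \ref{lem:a01}(c) together with equation (\ref{eq:k-throw}) identifies $\bar\mu_3=c_{\hat i}$, where $\hat i$ is the unique index in $a_{01}(\bar x,\bar y)\setminus\mathcal{E}(\bar y)$. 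Pairwise distinctness of the components of $c$ forces $i=\hat i$, which contradicts $\bar x_{\hat i}=0\ne\bar x_i$.

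The main obstacle is precisely this last contradiction: it requires matching the case-analysis of (\ref{eq:kth-rowkkt}) for $\mathcal{S}(t)$ with the case-analysis of (\ref{eq:k-throw}) for $\mathcal{R}$ and exploiting the distinctness of the $c_i$'s; the remaining three parts are essentially immediate once Remark \ref{rem:multipliers} is on the table.
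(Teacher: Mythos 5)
Your proof is correct and follows essentially the same strategy as the paper: continuity plus multiplier convergence (Remark \ref{rem:multipliers}) played against NDT2/NDT3 for (a)--(c), and the pairwise distinctness of the components of $c$ via Lemma \ref{lem:a01}c) together with (\ref{eq:k-throw}) and (\ref{eq:kth-rowkkt}) for the second inclusion in (d). Two local deviations are worth noting, both sound: in (b) you reuse the extension-by-zero multiplier argument from (a), whereas the paper instead runs a three-case analysis of (\ref{eq:kth-rowkkt}) (over $\mathcal{H}^{\ge}$, $\mathcal{H}^{\le}$, $\mathcal{O}$) to track explicitly where the missing $\mu_2$-contribution goes when the upper bound becomes inactive; and in (c) you obtain the contradiction in one stroke from $\bar\varrho_{1,i}\neq 0$ via Remark \ref{rem:multipliers}d), whereas the paper splits into the cases $i\in\mathcal{N}(y^t)$ and $i\in\mathcal{O}(x^t,y^t)$ and uses $\bar\varrho_{2,i}<0$.
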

\begin{proof}
a) We start by proving $Q_0\left(\bar x\right)=Q_0\left(x^{t}\right)$.
Due to continuity arguments, we have $Q_0\left(x^{t}\right)\subset Q_0\left(\bar x\right)$ for all sufficiently small $t$.
Let us now assume that there exists
$\bar i \in  Q_0\left(\bar x\right)\backslash Q_0\left(x^{t}\right)$ along a subsequence.
Hence, for the corresponding multipliers it holds $\mu_{\bar i}^t=0$.
NDT1 allows us to apply Remark \ref{rem:multipliers}, and we thus have $\bar \mu_{\bar i}=\lim\limits_{t \rightarrow \infty} \mu_{\bar i}^t=0$, a contradiction to NDT2. Consequently, $Q_0\left(\bar x\right)=Q_0\left(x^{t}\right)$ holds for all sufficiently small $t$.

b) Next, we prove $\mathcal{E}\left(\bar y\right)=\mathcal{E}\left(y^{t}\right)$.
Again, continuity arguments provide $\mathcal{E}\left(y^{t}\right)\subset \mathcal{E}\left(\bar y\right)$ for all sufficiently small $t$.
Similar to the first part of the proof, we now assume there exists $\bar i \in \mathcal{E}\left(\bar y\right)\backslash \mathcal{E}\left(y^{t}\right)$ along a subsequence. 
As we have seen in Lemma \ref{lem:a01}, T-stationarity of $(\bar x, \bar y)$ implies in particular $c_{\bar i}=-\bar \mu_{2,\bar i}+\bar \mu_{3,\bar i}$. 
Moreover, NDT1 and Remark \ref{rem:multipliers} provide 
$\lim\limits_{t \to 0}\mu_{3}^{t}=\bar \mu_{3}$.
Since $\bar i \notin\mathcal{N}\left(y^{t}\right)$, we distinguish the following cases:

$(i)$ $\bar i \in\mathcal{H}^{\ge}\left(x^{t},y^{t}\right)\backslash \mathcal{E}\left(y^{t}\right)$. Karush-Kuhn-Tucker conditions for $\left(x^t, y^t\right)$ imply $c_{\bar i}= \mu^t_{3,\bar i}+ \eta^{\ge,t}_{\bar i}x_{\bar i}$, cf. (\ref{eq:kth-rowkkt}).
It follows $
-\bar \mu_{2,\bar i}+\bar \mu_{3,\bar i}=\mu^t_{3,\bar i}+ \eta^{\ge,t}_{\bar i}x_{\bar i}$.
By taking the limit, we can cancel out $\bar \mu_{3,\bar i}$ and $\mu^t_{3,\bar i}$.
This leads to a contradiction because the left-hand side of the equation is strictly negative due to NDT2 and the right-hand side is nonnegative since  $\eta^{\ge,t}_{\bar i}$ is nonnegative and $x_{\bar i}$ is positive.

$(ii)$ $\bar i \in\mathcal{H}^{\le}\left(x^{t},y^{t}\right)\backslash \mathcal{E}\left(y^{t}\right)$.
By using (\ref{eq:kth-rowkkt}), we get
$c_{\bar i}= \mu^t_{3,\bar i}- \eta^{\le,t}_{\bar i}x_{\bar i}$.
This leads to a contradiction just as in the previous case.

$(iii)$ $\bar i \in\mathcal{O}\left(x^{t},y^{t}\right)$.
Analogously, we obtain $c_{\bar i}= \mu^t_{3,\bar i}$ from (\ref{eq:kth-rowkkt}).
It follows
$-\bar \mu_{2,\bar i}+\bar \mu_{3,\bar i}=\mu^t_{3,\bar i}$. 
Taking the limits leads to $\bar \mu_{2,\bar i}=0$, a contradiction with NDT2.

\noindent
 Altogether, $\mathcal{E}\left(\bar y\right)\backslash \mathcal{E}\left(y^{t}\right)=\emptyset$ for all sufficiently small $t$, and the assertion follows. 

 c) Clearly, $a_{00}\left(\bar x, \bar y\right) \cap \mathcal{E}(y^t)=\emptyset$ for sufficiently small $t$.
Let us assume  there exists an $\bar i \in a_{00}(\bar x,\bar y)\cap\mathcal{N}\left(y^t\right)$. Due to (\ref{eq:kth-rowkkt}), we then have $c_{\bar i}= \mu_3^t + \nu_{\bar i}^t$,
whereas (\ref{eq:k-throw}) provides
$ c_{\bar i}=\bar \mu_3 + \bar \varrho_{2,\bar i}$.
According to Remark \ref{rem:multipliers}, we have 
$\lim\limits_{t \to 0}\mu_{3}^{t}=\bar \mu_3$. Consequently, it must hold
$
\lim\limits_{t \to 0} \nu_{\bar i}^t=\bar \varrho_{2,\bar i}$.
This, however, cannot be true since $\nu_{\bar i}^t\geq 0$, while
$\bar \varrho_{2,\bar i}<0$ due to NDT3 from the nondegeneracy of $(\bar x, \bar y)$, a contradiction. 
Let us assume now that there exists an $\bar i \in a_{00}(\bar x,\bar y)\cap \mathcal{O}\left(x^t,y^t\right)$. Analogously, we get $\bar \varrho_{2,\bar i}=0$, again a contradiction to NDT3. Overall, we get the assertion.

 d) Clearly, $a_{01}\left(\bar x,\bar y\right)\cap \mathcal{N}(y^t)  =\emptyset$ for sufficiently small $t$. From c) we also know that $a_{00}\left(\bar x,\bar y\right)\cap \mathcal{N}(y^t)  =\emptyset$. Altogether, the first inclusion of the assertion follows immediately. Further, it also holds
$a_{10}\left(\bar x, \bar y\right) \cap \mathcal{E}(y^t)= \emptyset$ for sufficiently small $t$.
Let us assume there exists an  $\bar i \in a_{10}(\bar x,\bar y)\cap\mathcal{O}\left(x^t,y^t\right)$. Due to (\ref{eq:kth-rowkkt}), we have $c_{\bar i}=\mu_3^t$.
In view of Lemma \ref{lem:a01}c), there exists an index $\tilde i \in a_{01}(\bar x,\bar y)\backslash \mathcal{E}\left(\bar y\right)$. Thus, T-stationarity of $(\bar x, \bar y)$ implies via (\ref{eq:k-throw}) that $c_{\tilde i}=\bar \mu_3$.
By taking the limit and Remark \ref{rem:multipliers}, we obtain $c_{\bar i}=c_{\tilde i}$, but $\bar i\not=\tilde i$, a contradiction to the choice of $c$.
\end{proof}

Next Theorem \ref{thm:kktsequence} highlights the convergence properties of the Scholtes-type regularization method.

\begin{theorem}[Convergence from $\mathcal{S}$ to $\mathcal{R}$ again]
\label{thm:kktsequence}
Suppose that a sequence of nondegenerate Karush-Kuhn-Tucker points $(x^{t},y^{t})$ of 
$\mathcal{S}$ with quadratic index $m$ converges to $\left(\bar x,\bar y\right)$ for $t \to 0$. If $(\bar x,\bar y)$ is a nondegenerate T-stationary point of $\mathcal{R}$, then we have for its T-Index: 
\[
\max\left\{m - \left\vert\left\{i\in a_{01}\left(\bar x,\bar y\right)\,\left\vert\,\bar \sigma_{1,i}=0\right.\right\} \right\vert, 0\right\}\le TI \leq m.
\]
If additionally NDT6 holds at $(\bar x,\bar y)$, then the indices coincide, i.e. $TI= m$.
\end{theorem}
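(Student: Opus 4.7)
The plan is to compare the reduced Hessians of $\mathcal{S}$ at $(x^t,y^t)$ and of $\mathcal{R}$ at $(\bar x,\bar y)$ by decomposing $\mathcal{T}^{\mathcal{S}}_{(x^t,y^t)}$ into a ``principal'' subspace that converges, as $t\to 0$, to $\mathcal{T}^{\mathcal{R}}_{(\bar x,\bar y)}$, together with two families of extra directions: a biactive family of dimension $|a_{00}(\bar x,\bar y)|=BI$ that will contribute exclusively negative eigenvalues via NDT3, and an $a_{01}$-family of dimension at most $r:=|\{i\in a_{01}(\bar x,\bar y):\bar\sigma_{1,i}=0\}|$ whose sign is uncontrolled.

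First I would assemble the preparatory facts. Theorem~\ref{thm:mpoc-licq} guarantees LICQ at $(x^t,y^t)$ for small $t$, so KKT multipliers are unique. Lemma~\ref{lem:QandE} aligns the active sets as $Q_0(x^t)=Q_0(\bar x)$, $\mathcal{E}(y^t)=\mathcal{E}(\bar y)$, $a_{00}\subset\mathcal{H}(x^t,y^t)$ and $\mathcal{N}(y^t)\subset a_{10}\subset\mathcal{N}(y^t)\cup\mathcal{H}(x^t,y^t)$, while Remark~\ref{rem:multipliers} yields $\lambda^t\to\bar\lambda$, $\mu^t\to\bar\mu$ and the convergence of the characteristic products to the $\bar\sigma,\bar\varrho$-multipliers. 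Combining Remark~\ref{rem:multipliers}(b) with strict complementarity~ND2 forces any $i\in a_{01}$ that escapes $\mathcal{H}(x^t,y^t)$ to satisfy $\bar\sigma_{1,i}=0$, and similarly for the single index $i^*\in a_{01}\setminus\mathcal{E}(\bar y)$ landing in $\mathcal{O}(x^t,y^t)$.

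Using LICQ at $(x^t,y^t)$ I would split the constraints defining $\mathcal{T}^{\mathcal{S}}_{(x^t,y^t)}$ to produce a direct-sum decomposition
\[
\mathcal{T}^{\mathcal{S}}_{(x^t,y^t)}\;=\;W^t\;\oplus\;V^t_{a_{00}}\;\oplus\;V^t_\star,
\]
where $W^t$ has the same dimension as $\mathcal{T}^{\mathcal{R}}_{(\bar x,\bar y)}$ and converges to it; $V^t_{a_{00}}$ is $|a_{00}|$-dimensional and spanned by the biactive directions $v^{i,t}:=(x^t_ie_i,-y^t_ie_i)$, $i\in a_{00}$, which encode the single $\mathcal{S}$-constraint $y^t_i\xi^x_i+x^t_i\xi^y_i=0$ replacing the two $\mathcal{R}$-constraints $\xi^x_i=\xi^y_i=0$; and $V^t_\star$ collects the extra directions arising from indices $i\in a_{01}$ that fall outside $\mathcal{H}(x^t,y^t)$, so that $\dim V^t_\star\le r$ by the previous paragraph. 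A dimension count using Lemma~\ref{lem:a01} and Lemma~\ref{lem:EandH} shows this really is a direct-sum decomposition of $\mathcal{T}^{\mathcal{S}}_{(x^t,y^t)}$. On $W^t$, continuity of the Hessian data together with the multiplier convergence produces
\[
D^2 L^{\mathcal{S}}(x^t,y^t)\restriction_{W^t}\;\longrightarrow\;D^2 L^{\mathcal{R}}(\bar x,\bar y)\restriction_{\mathcal{T}^{\mathcal{R}}_{(\bar x,\bar y)}},
\]
which is nonsingular by NDT4, so this summand contributes exactly $QI(\bar x,\bar y)$ negative eigenvalues for all sufficiently small $t$. On $V^t_{a_{00}}$ a direct computation gives
\[
(v^{i,t})^{\top}D^2 L^{\mathcal{S}}(x^t,y^t)v^{i,t}=A^t_i(x^t_i)^2+2\bigl(\eta^{\ge,t}_i-\eta^{\le,t}_i\bigr)x^t_iy^t_i
\]
with $A^t_i$ bounded; Remark~\ref{rem:multipliers}(d) combined with NDT3 yields $(\eta^{\ge,t}_i-\eta^{\le,t}_i)x^t_i\to\bar\varrho_{2,i}<0$, and since $y^t_i>0$ the Rayleigh quotient is asymptotically dominated by $2\bar\varrho_{2,i}y^t_i/((x^t_i)^2+(y^t_i)^2)\to-\infty$. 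Hence $D^2 L^{\mathcal{S}}\restriction_{V^t_{a_{00}}}$ is negative definite for small $t$ and contributes $|a_{00}|=BI$ further negative eigenvalues. On $V^t_\star$ the relevant $\eta$-multipliers vanish by strict complementarity, so the Hessian sign is indeterminate, giving between $0$ and $\dim V^t_\star\le r$ additional negative eigenvalues.

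A Sylvester/Courant--Fischer counting on the decomposition above -- after verifying that the off-diagonal Hessian couplings between $W^t$, $V^t_{a_{00}}$ and $V^t_\star$ are asymptotically negligible, which follows from the explicit form of $v^{i,t}$, the multiplier asymptotics, and a Schur complement taking the limit $D^2 L^{\mathcal{S}}\restriction_{V^t_{a_{00}}}\to -\infty$ -- would then yield
\[
QI(\bar x,\bar y)+BI\;\le\;m\;\le\;QI(\bar x,\bar y)+BI+r.
\]
The left inequality is $TI\le m$; the right one, together with $TI\ge 0$, rearranges to $TI\ge\max\{m-r,0\}$. If NDT6 additionally holds then $r=0$ and both bounds coincide, so $TI=m$. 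The main technical obstacle is the sign analysis on $V^t_{a_{00}}$: both the direction $v^{i,t}$ and the cross-Hessian coupling degenerate with $t$, and the decisive negative sign is extracted only by normalizing by $\|v^{i,t}\|^2$ and exploiting the precise asymptotics of the $\eta$-multipliers delivered by NDT3. A secondary delicate point is verifying the asymptotic block-diagonality of $D^2L^\mathcal{S}$ on $W^t\oplus V^t_{a_{00}}\oplus V^t_\star$, so that the eigenvalue counts on the three summands really add up to the quadratic index $m$.
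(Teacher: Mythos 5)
Your overall architecture --- align the active index sets via Lemma~\ref{lem:QandE} and Remark~\ref{rem:multipliers}, produce $\left\vert a_{00}(\bar x,\bar y)\right\vert$ extra tangent directions on which the quadratic form of $D^2L^{\mathcal S}$ blows up to $-\infty$ (giving $TI\le m$), and at most $r=\left\vert\{i\in a_{01}(\bar x,\bar y)\,:\,\bar\sigma_{1,i}=0\}\right\vert$ further uncontrolled directions from $a_{01}$-indices escaping $\mathcal H(x^t,y^t)$ (giving $m\le TI+r$) --- is the same as the paper's. The paper, however, organizes the counting through subspace inclusions ($\mathcal T'\subset\mathcal T^{\mathcal S}_{(x^t,y^t)}$) and eigenvalue interlacing rather than through a direct sum, so for the bound $m\le TI+r$ it never has to verify any asymptotic block-diagonality: it only needs that the form has at least $k^+$ positive eigenvalues on the larger space.

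The genuine gap is in your treatment of $V^t_{a_{00}}$, which carries the entire bound $TI\le m$. First, $v^{i,t}=(x^t_ie_i,-y^t_ie_i)$ does not lie in $\mathcal T^{\mathcal S}_{(x^t,y^t)}$: since $i\in a_{00}(\bar x,\bar y)\subset\mathcal H(x^t,y^t)$ forces $y^t_i\ne0$, this vector violates the active summation constraint $(0,e)\xi=0$ (and in general $(Dh_p,0)\xi=0$, $(Dg_q,0)\xi=0$); the admissible directions must be built by the implicit function theorem from the full constraint system with the two equations $\xi_i=0$, $\xi_{n+i}=0$ replaced by the single relaxed one. Second, and more seriously, your negativity argument covers only one asymptotic regime. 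Both $x^t_i,y^t_i\to0$ with $x^t_iy^t_i=\pm t$, and their relative rates are not controlled. Your claimed dominant term $2\bar\varrho_{2,i}\,y^t_i/((x^t_i)^2+(y^t_i)^2)$ tends to $-\infty$ only when $y^t_i$ is not too small against $(x^t_i)^2$; if, say, $x^t_i\sim t^{1/4}$ and $y^t_i\sim t^{3/4}$, it tends to $0$ and the bounded term $A^t_i(x^t_i)^2/\|v^{i,t}\|^2$, whose sign is unknown, dominates. In that regime one must switch to the other product of Remark~\ref{rem:multipliers}d), namely $(\eta^{\ge,t}_i-\eta^{\le,t}_i)y^t_i\to\bar\varrho_{1,i}$, use $\bar\varrho_{1,i}\ne0$ from NDT3, and determine the sign of $x^t_i$ from whether $i\in\mathcal H^{\ge}$ or $i\in\mathcal H^{\le}$. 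This is exactly why the paper splits $a_{00}(\bar x,\bar y)$ into $a^x_{00}$ (where $x^t_i/y^t_i$ converges) and $a^y_{00}$ (where $y^t_i/x^t_i\to0$) and normalizes the test vectors differently in the two cases; your plan never invokes $\bar\varrho_{1,i}\ne0$ and therefore cannot close this case. A related loose end: the couplings with the remaining indices of $\mathcal H(x^t,y^t)$ are not ``asymptotically negligible'' in general --- the paper instead shows that each such term has sign $\le0$ (its observation c) in Step 4), and it is this sign information, not negligibility, that makes the estimate go through.
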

\begin{proof}
The proof will be divided into 4 major steps.

{\bf Step 1a.}
We rewrite the tangent space corresponding to the Karush-Kuhn-Tucker point $\left(x^t,y^t\right)$. For that, we use Lemma \ref{lem:EandH}a) which provides that the summation constraint is active: 
\[
    \mathcal{T}^\mathcal{S}_{\left(x^t,y^t\right)}=\left\{
\xi \in \R^{2n}\,\left\vert\, \begin{array}{l} \begin{pmatrix}
Dh_p( x^t),0\end{pmatrix} \xi=0, p \in P, 
\begin{pmatrix}
Dg_q( x^t),0\end{pmatrix}\xi=0,q \in Q_0( x^t),\\
\begin{pmatrix}
0,e
\end{pmatrix}\xi=0,
\begin{pmatrix}
0,e_i
\end{pmatrix}\xi=0, i\in \mathcal{E}(y^t),\\
\begin{pmatrix}
y_i^te_i,x_i^te_i
\end{pmatrix}\xi=0, i \in \mathcal{H}\left(x^t,y^t\right)\cap a_{00}\left(\bar x, \bar y\right),\\
\begin{pmatrix}
y_i^te_i,x_i^te_i
\end{pmatrix}\xi=0, i \in \mathcal{H}\left(x^t,y^t\right)\cap a_{01}\left(\bar x, \bar y\right),\\
\begin{pmatrix}
y_i^te_i,x_i^te_i
\end{pmatrix}\xi=0, i \in \mathcal{H}\left(x^t,y^t\right)\cap a_{10}\left(\bar x, \bar y\right),\\
\begin{pmatrix}
    0,e_i
\end{pmatrix}\xi=0, i \in \mathcal{N}\left(y^t\right)
\end{array}
\right.\right\}.
\]
In total there are, due to LICQ,
\[
\begin{array}{rcl}
\alpha^\mathcal{S}_t&=&
\left\vert P\right\vert +\left\vert Q_0\left(x^t\right)\right\vert +1+
\left\vert \mathcal{E}\left(y^t\right)\right\vert +
\left\vert \mathcal{H}\left(x^t,y^t\right)\cap a_{00}\left(\bar x,\bar y\right)\right\vert 
\\
&&
+\left\vert \mathcal{H}\left(x^t,y^t\right)\cap a_{01}\left(\bar x,\bar y\right)\right\vert 
+\left\vert \mathcal{H}\left(x^t,y^t\right)\cap a_{10}\left(\bar x,\bar y\right)\right\vert 
+\left\vert \mathcal{N}\left(y^t\right)\right\vert 
\end{array}
\]
linearly independent vectors involved. We use Lemma \ref{lem:QandE}a) and \ref{lem:QandE}b) to
substitute $\left\vert Q_0\left(x^t\right)\right\vert $ with $\left\vert Q_0\left(\bar x\right)\right\vert $ and $\left\vert \mathcal{E}\left(y^t\right)\right\vert $ with
$\left\vert \mathcal{E}\left(\bar y\right)\right\vert $, respectively. The latter set has cardinality of  $n-s-1$ due to Lemma \ref{lem:a01}c). Additionally, we use Lemma \ref{lem:QandE}c) and \ref{lem:QandE}d) to conclude:
\[
\begin{array}{rcl}
\alpha^\mathcal{S}_{t}&=&
\left\vert P\right\vert +\left\vert Q_0\left(\bar x\right)\right\vert +1+
n-s-1
+\left\vert a_{00}\left(\bar x,\bar y\right)\right\vert 
\\
&&
+\left\vert \mathcal{H}\left(x^t,y^t\right)\cap a_{01}\left(\bar x,\bar y\right)\right\vert 
+\left\vert a_{10}\left(\bar x,\bar y\right)\right\vert .
\end{array}
\]
Finally, $\left\vert a_{00}\left(\bar x,\bar y\right)\right\vert 
+\left\vert a_{10}\left(\bar x,\bar y\right)\right\vert =s$, cf. Lemma \ref{lem:a01}b).
Thus, we have:
\[
\begin{array}{rcl}
\alpha^\mathcal{S}_{t}&=&
\left\vert P\right\vert +\left\vert Q_0\left(\bar x\right)\right\vert 
+\left\vert \mathcal{H}\left(x^t,y^t\right)\cap a_{01}\left(\bar x,\bar y\right)\right\vert 
+n.
\end{array}
\]
{\bf Step 1b.} We examine the tangent space corresponding to the T-stationary point $(\bar x,\bar y)$. For this purpose, we consider the following vectors from its definition:
\[
\begin{pmatrix}
0\\
e_i
\end{pmatrix},i\in \mathcal{E}(\bar y),
\begin{pmatrix}
0\\
e_i
\end{pmatrix}, i \in a_{10}\left(\bar x, \bar y\right)\cup a_{00}\left(\bar x, \bar y\right),
\begin{pmatrix}
0\\
e
\end{pmatrix}.\]
The latter vector is involved due to Lemma \ref{lem:a01}a). The number of these vectors is due to Lemma \ref{lem:a01}c) equal to $(n-s-1)+
s+1=n$. Moreover, they are linearly independent due to MPOC-LICQ.
Hence, we can write the respective tangent space as follows:
\[
    \mathcal{T}^\mathcal{R}_{(\bar x,\bar y)}=\left\{
\xi \in \R^{2n}\,\left\vert\, \begin{array}{l} \begin{pmatrix}

Dh_p(\bar x), 0\end{pmatrix} \xi=0, p \in P, 
\begin{pmatrix}
Dg_q(\bar x), 0\end{pmatrix}\xi=0,q \in Q_0(\bar x),\\
\begin{pmatrix}
e_i,0
\end{pmatrix}\xi=0, i \in a_{00}(\bar x,\bar y) \cup a_{01}(\bar x,\bar y),
\xi_{n+1}=\ldots=\xi_{2n}=0.
\end{array}
\right.\right\}
\]
In total there are, due to MPOC-LICQ,
\[
\alpha^\mathcal{R}=\left\vert P\right\vert +\left\vert Q_0\left(\bar x\right)\right\vert 
+\left\vert a_{00}(\bar x,\bar y)\right\vert +\left\vert a_{01}(\bar x,\bar y)\right\vert +n
\]
linearly independent vectors involved.

{\bf Step 2.}
Let $\mathcal{T}\subset \R^{2n}$ be a linear subspace. We denote the number of negative
eigenvalues of $D^2 L^\mathcal{S}\left( x^t, y^t\right)\restriction_{\mathcal{T}}$ by $QI^\mathcal{S}_{t,\mathcal{T}}$. Analogously, $QI^\mathcal{R}_{t,\mathcal{T}}$ stands for the number of negative eigenvalues of $D^2 L^\mathcal{R}\left( x^t, y^t\right)\restriction_{\mathcal{T}}$ and $\overline{QI}^\mathcal{R}_{\mathcal{T}}$ stands for the number of negative eigenvalues of $D^2 L^\mathcal{R}\left(\bar x , \bar y \right)\restriction_{\mathcal{T}}$. 
We have the following relation between the involved Hessians of the Lagrange functions by denoting $E(i)=e_ie_{n+i}^T+e_{n+i}e_i^T$, $i =1, \ldots, n$:
\begin{equation}
\label{eq:Hessetandbar}
\begin{array}{rcl}
\displaystyle
D^2 L^\mathcal{S}\left( x^t, y^t\right) &=& \displaystyle
D^2 L^\mathcal{R}\left( x^t, y^t\right) - \sum\limits_{i\in \mathcal{H}^\ge\left(x^t,y^t\right)}  \eta_i^{\ge,t} E(i)\displaystyle 
+
\sum\limits_{i\in \mathcal{H}^\le\left(x^t,y^t\right)} \eta_i^{\le,t} E(i).
\end{array}
\end{equation}

{\bf Step 2a.}
It holds for $t$ sufficiently small:
\[
QI^\mathcal{S}_{t,\mathcal{T}^\mathcal{R}_{\left(\bar x,\bar y\right)}}=\overline{QI}^\mathcal{R}_{\mathcal{T}^\mathcal{R}_{\left(\bar x,\bar y\right)}}.
\]
Indeed, by using (\ref{eq:Hessetandbar}), we derive for any $\xi \in  \mathcal{T}^\mathcal{R}_{\left(\bar x,\bar y\right)}$:
\begin{equation}
    \label{eq:s-r-h1}
\begin{array}{rcl}
\xi^T D^2 L^\mathcal{S}\left( x^t, y^t\right) \xi&=&
 \xi^T D^2 L^\mathcal{R}\left(x^t, y^t\right) \xi - \displaystyle \sum\limits_{i\in \mathcal{H}^\ge\left(x^t,y^t\right)} 2\eta_i^{\ge,t} \xi_i  \xi_{n+i} \\
&&\displaystyle +
\sum\limits_{i\in \mathcal{H}^\le\left(x^t,y^t\right)} 2\eta_i^{\le,t}  \xi_i \xi_{n+i}=
 \xi^T D^2 L^\mathcal{R}\left( x^t, y^t\right)  \xi,
\end{array}
\end{equation}
since $\xi_{n+1}=\ldots=\xi_{2n}=0$ as seen in Step 1b.
Hence, we get
 $
QI^\mathcal{S}_{t,\mathcal{T}^\mathcal{R}_{\left(\bar x,\bar y\right)}}=QI^\mathcal{R}_{t,\mathcal{T}^\mathcal{R}_{\left(\bar x,\bar y\right)}}$.
Due to NDT4, continuity arguments provide
$
QI^\mathcal{R}_{t,\mathcal{T}^\mathcal{R}_{\left(\bar x,\bar y\right)}}=\overline{QI}^\mathcal{R}_{\mathcal{T}^\mathcal{R}_{\left(\bar x,\bar y\right)}}$.

{\bf Step 2b.}
We claim that the numbers of positive and negative eigenvalues of $D^2 L^\mathcal{S}\left( x^t, y^t\right)\restriction_{\mathcal{T}^\mathcal{R}_{\left(\bar x,\bar y\right)}}$ and of $D^2 L^\mathcal{S}\left( x^t, y^t\right)\restriction_{\mathcal{T}^\prime}$, respectively, coincide, where
\[
  \mathcal{T}^{\prime} = \left\{
\xi \in \R^{2n}\,\left\vert\, \begin{array}{l} \begin{pmatrix}
Dh_p(x^t),0\end{pmatrix} \xi=0, p \in P, 
\begin{pmatrix}
Dg_q(x^t),0\end{pmatrix}\xi=0,q \in Q_0(\bar x),\\
\begin{pmatrix}
0,e
\end{pmatrix}\xi=0,
\begin{pmatrix}
0,e_i
\end{pmatrix}\xi=0, i\in \mathcal{E}(\bar y),\\
\begin{pmatrix}
e_i,0
\end{pmatrix}\xi=0, i \in a_{00}(\bar x,\bar y),
\begin{pmatrix}
0,e_i
\end{pmatrix}\xi=0, i \in a_{00}(\bar x,\bar y),\\
\begin{pmatrix}
y^t_ie_i,x^t_ie_i
\end{pmatrix}\xi=0, i \in a_{01}(\bar x,\bar y)\cup a_{10}(\bar x,\bar y)
\end{array}
\right.\right\}.
\]
Let $\left\{\lambda^+_1,\ldots,\lambda^+_{k^+}\right\}$ be the positive 
eigenvalues of 
$D^2 L^\mathcal{S}\left( x^t, y^t\right)\restriction_{\mathcal{T}^\mathcal{R}_{\left(\bar x,\bar y\right)}}$ with corresponding eigenvectors 
$\left\{\xi^+_1,\ldots,\xi^+_{k^+}\right\}$.
Hence, for all $k=1,\ldots, k^+$:
\[
{\xi^+_k}^T D^2 L^\mathcal{S}\left( x^t, y^t\right)\xi^+_k>0.
\]
We rewrite the tangent space as follows:
\[
    \mathcal{T}^\mathcal{R}_{\left(\bar x,\bar y\right)}=\left\{
\xi \in \R^{2n}\,\left\vert\, \begin{array}{l} \begin{pmatrix}

Dh_p(\bar x),0\end{pmatrix} \xi=0, p \in P, 
\begin{pmatrix}
Dg_q(\bar x),0\end{pmatrix}\xi=0,q \in Q_0(\bar x),\\
\begin{pmatrix}
0,e
\end{pmatrix}\xi=0,
\begin{pmatrix}
0,e_i
\end{pmatrix}\xi=0, i\in \mathcal{E}(\bar y),\\
\begin{pmatrix}
e_i,0
\end{pmatrix}\xi=0, i \in a_{00}(\bar x,\bar y),
\begin{pmatrix}
0,e_i
\end{pmatrix}\xi=0, i \in a_{00}(\bar x,\bar y),\\
\begin{pmatrix}
\bar y_ie_i,\bar x_ie_i
\end{pmatrix}\xi=0, i \in a_{01}(\bar x,\bar y)\cup a_{10}(\bar x,\bar y)
\end{array}
\right.\right\}.
\]
Due to MPOC-LICQ, the application of the implicit function theorem provides the existence of
$\delta_2,\delta_3>0$ such that for all $k=1,\ldots,k^+$ and $t<\delta_2$ there exists $\xi_{k,t}$ with $\left\| \xi_{k,t}-\xi^+_k\right\|<\delta_3$ and $\xi_{k,t}\in \mathcal{T}^\prime$.
 We can choose $t$ even smaller, such that $\xi_{1,t},\ldots,\xi_{k^+,t}$ remain linearly independent and for all $k=1,\ldots,k^+$ it holds:
\[
{ \xi_{k,t}}^T D^2 L^\mathcal{S}\left( x^t, y^t\right) \xi_{k,t}>0.
\]
%
Hence, $D^2 L^\mathcal{S}\left( x^t, y^t\right)\restriction_{\mathcal{T}^\prime}$ has at least $k^+$ positive eigenvalues. If we repeat the above reasoning for negative eigenvalues, the matrix $D^2 L^\mathcal{S}\left( x^t, y^t\right)\restriction_{\mathcal{T}^\prime}$ has at least as many negative eigenvalues as $D^2 L^\mathcal{S}\left( x^t, y^t\right)\restriction_{\mathcal{T}^\mathcal{R}_{\left(\bar x,\bar y\right)}}$. Additionally, we show that the dimensions of $\mathcal{T}^\mathcal{R}_{\left(\bar x,\bar y\right)}$ and $\mathcal{T}^\prime$ coincide. By Step 1b, we have $2n-\alpha^\mathcal{R}$ for the dimension of $\mathcal{T}^\mathcal{R}_{\left(\bar x,\bar y\right)}$. Since MPOC-LICQ remains valid in the neighborhood of $(\bar x, \bar y)$, we get again 
$2n-\alpha^{\mathcal{R}}$ for the dimension of $\mathcal{T}^\prime$. By continuity arguments, NDT4 and (\ref{eq:s-r-h1}) provide that $D^2 L^\mathcal{S}\left( x^t, y^t\right)\restriction_{\mathcal{T}^\mathcal{R}_{\left(\bar x,\bar y\right)}}$ is nonsingular. Altogether, the assertion follows.

{\bf Step 2c.} 
We claim that 
\[
QI^\mathcal{S}_{t,\mathcal{T}^\mathcal{S}_{\left(x^t,y^t\right)}}\le QI^\mathcal{S}_{t,\mathcal{T}^\mathcal{R}_{\left(\bar x,\bar y\right)}}+\alpha^\mathcal{R}-\alpha^\mathcal{S}_t.
\]
For that, we focus on the dimension of $\mathcal{T}^\mathcal{S}_{\left(x^t,y^t\right)}$. As a consequence of Step 1a it is $2n-\alpha^\mathcal{S}_t$. 
Due to continuity arguments, we can choose $t$ small enough to ensure 
$x^t_{i}\ne 0$, $i\in a_{10}(\bar x,\bar y)$ and
$y^t_{i}\ne 0$, $i\in a_{01}(\bar x,\bar y)$.
Using this and Lemma \ref{lem:QandE}a), \ref{lem:QandE}b), and \ref{lem:QandE}d), it follows
that $\mathcal{T}^\prime \subset \mathcal{T}^\mathcal{S}_{\left(x^t,y^t\right)}$.
Therefore, using Step 2b,  
$
QI^\mathcal{S}_{t,\mathcal{T}^\mathcal{S}_{\left(x^t,y^t\right)}}\le 2n-\alpha^\mathcal{S}_t-k^+$.
We observe in view of NDT4 and Step 1b that 
$
QI^\mathcal{S}_{t,\mathcal{T}^\mathcal{R}_{\left(\bar x,\bar y\right)}}
=2n-\alpha^\mathcal{R}-k^+$.
The assertion follows immediately.

{\bf Step 3.} Let us show that
\[
\max\left\{m - \left\vert \left\{i\in a_{01}\left(\bar x,\bar y\right)\,\left\vert\,\bar \sigma_{1,i}=0\right.\right\} \right\vert , 0\right\}\le TI.
\]
In view of Step 2a, Step 2c and due to continuity, we have for $t$ sufficiently small:

\[
\begin{array}{rcl}
m=QI^\mathcal{S}_{t,\mathcal{T}^\mathcal{S}_{\left(x^t,y^t\right)}}&\overset{Step\,2c}{\le}& QI^\mathcal{S}_{t,\mathcal{T}^\mathcal{R}_{\left(\bar x,\bar y\right)}}+\alpha^\mathcal{R}-\alpha^\mathcal{S}_t \overset{Step\,2a}{=} 
\overline{QI}^\mathcal{R}_{\mathcal{T}^\mathcal{R}_{\left(\bar x,\bar y\right)}}
+\alpha^\mathcal{R}-\alpha^\mathcal{S}_t\\
&\overset{Step\,1}{=}& \overline{QI}^\mathcal{R}_{\mathcal{T}^\mathcal{R}_{\left(\bar x,\bar y\right)}}
+\left\vert a_{00}(\bar x,\bar y)\right\vert +\left\vert a_{01}(\bar x,\bar y)\right\vert -
\left\vert \mathcal{H}\left(x^t,y^t\right)\cap a_{01}\left(\bar x,\bar y\right)\right\vert \\
&=&TI+\left\vert a_{01}(\bar x,\bar y)\right\vert -
\left\vert \mathcal{H}\left(x^t,y^t\right)\cap a_{01}\left(\bar x,\bar y\right)\right\vert .
\end{array}
\]
We show for $t$ sufficiently small:
\[\left\vert a_{01}(\bar x,\bar y)\right\vert -
\left\vert \mathcal{H}\left(x^t,y^t\right)\cap a_{01}\left(\bar x,\bar y\right)\right\vert \le
\left\vert \left\{i\in a_{01}\left(\bar x,\bar y\right)\,\left\vert\,\bar \sigma_{1,i}=0\right.\right\} \right\vert ,
\]
and the assertion will follow immediately since $TI\ge 0$.
Clearly,
\[\left\vert a_{01}(\bar x,\bar y)\right\vert -
\left\vert \mathcal{H}\left(x^t,y^t\right)\cap a_{01}\left(\bar x,\bar y\right)\right\vert =
\left\vert a_{01}(\bar x,\bar y)\backslash \mathcal{H}\left(x^t,y^t\right) \right\vert .
\]
Suppose $\bar i \in a_{01}\left(\bar x,\bar y\right)$ with $\bar \sigma_{1,\bar i}\ne 0$. In view of Remark \ref{rem:multipliers}, the difference $\eta^{\ge,t}_{\bar i}-\eta^{\le,t}_{\bar i}$ cannot vanish for all $t$ sufficiently small. In particular, one of the multipliers $\eta^{\ge,t}_{\bar i}$ or $\eta^{\le,t}_{\bar i}$ has to be not vanishing for all $t$ sufficiently small. Hence, $\bar i\in \mathcal{H}\left(x^t,y^t\right)$. We therefore have: 
\[
a_{01}(\bar x,\bar y)\backslash \mathcal{H}\left(x^t,y^t\right) \subset  \left\{i\in a_{01}\left(\bar x,\bar y\right)\,\left\vert\,\bar \sigma_{1,i}= 0\right.\right\}.
\]

{\bf Step 4.}
Without loss of generality -- considering subsequences if needed -- we can assume that for any $i\in a_{00}(\bar x,\bar y)$ at least one of the sequences $\frac{x_i^t}{y_i^t}$ or $\frac{y_i^t}{x_i^t}$ is convergent. First, we note that the quotients are well defined due to Lemma \ref{lem:QandE}c). Moreover, if the former sequence does not contain a convergent subsequence, we find a subsequence that tends to plus or minus infinity. Consequently, the corresponding subsequence of the latter reciprocal sequence has to converge to zero.
We define the following auxiliary sets:
\[
a_{00}^x(\bar x,\bar y)=\left\{i \in a_{00}(\bar x,\bar y)\,\left\vert\, \frac{x^t_i}{y^t_i}\mbox{ converges for } t \to 0 \right. \right\},
a_{00}^y(\bar x,\bar y)=a_{00}(\bar x,\bar y)\backslash a_{00}^x(\bar x,\bar y).
\]

For $\bar i \in a_{00}^x\left(\bar x,\bar y\right)$ we consider $\mathcal{T}^{\mathcal{R}}_{\left(\bar x,\bar y\right)}$ and replace two of the involved equations, namely 
$\begin{pmatrix}
e_{\bar i},0
\end{pmatrix}\xi=0$ and 
$\begin{pmatrix}
0,e_{\bar i}
\end{pmatrix}\xi=0$ by one equation
$\begin{pmatrix}
e_{\bar i},\lim\limits_{t \to 0}\frac{x_{\bar i}^t}{y_{\bar i}^t}e_{\bar i}
\end{pmatrix}\xi=0$.
Clearly, the vectors involved in the definition of the newly generated linear space, i.\,e.
\[
    \mathcal{T}^{{\bar i}}=\left\{
\xi \in \R^{2n}\,\left\vert\, \begin{array}{l} \begin{pmatrix}

Dh_p(\bar x),0\end{pmatrix} \xi=0, p \in P, 
\begin{pmatrix}
Dg_q(\bar x),0\end{pmatrix}\xi=0,q \in Q_0(\bar x),\\
\begin{pmatrix}
0,e
\end{pmatrix}\xi=0,
\begin{pmatrix}
0,e_i
\end{pmatrix}\xi=0, i\in \mathcal{E}(\bar y),\\
\begin{pmatrix}
e_i,0
\end{pmatrix}\xi=0, i \in a_{00}(\bar x,\bar y)\backslash\{\bar i\},
\begin{pmatrix}
0,e_i
\end{pmatrix}\xi=0, i \in a_{00}(\bar x,\bar y)\backslash\{\bar i\},\\
\begin{pmatrix}
e_{\bar i},\lim\limits_{t \to 0}\frac{x_{\bar i}^t}{y_{\bar i}^t}e_{\bar i}
\end{pmatrix}\xi=0,
\begin{pmatrix}
\bar y_ie_i,\bar x_ie_i
\end{pmatrix}\xi=0, i \in a_{01}(\bar x,\bar y)\cup a_{10}(\bar x,\bar y)\}
\end{array}
\right.\right\},
\]
remain linearly independent.
The dimension of $ \mathcal{T}^{{\bar i}}$ is greater than the dimension of
$\mathcal{T}^{\mathcal{R}}_{\left(\bar x,\bar y\right)}$ by one. Moreover, there exists
$\xi^{\bar i} \in \mathcal{T}^{{\bar i}}$ with 
$
\xi^{\bar i}_{n+\bar i} \not =0$.
Indeed, assume that no such $\xi^{\bar i}$ exists, then we can add the equation 
$\begin{pmatrix}
0,e_{\bar i}
\end{pmatrix}\xi=0$ to the defining equations of $ \mathcal{T}^{ {\bar i}}$ wihout changing it. The resulting space, however, is identical to
$\mathcal{T}^{\mathcal{R}}_{\left(\bar x,\bar y\right)}$, a contradiction.
Without loss of generality, we assume 
$\xi^{\bar i}_{n+\bar i}=1$.
Further, by straightforward application of the implicit function theorem and due to Lemma \ref{lem:QandE}a) and \ref{lem:QandE}b), we find a sequence of vectors $\xi^{\bar i}_t \in \mathcal{T}^{ \bar i}_{t}$ that converges to $\xi^{\bar i}$ for $t \to 0$. 
For this, we define
\[
\mathcal{T}^{\bar i}_t=\left\{
\xi \in \R^{2n}\,\left\vert\, \begin{array}{l} \begin{pmatrix}
Dh_p(x^t),0\end{pmatrix} \xi=0, p \in P, 
\begin{pmatrix}
Dg_q(x^t),0\end{pmatrix}\xi=0,q \in Q_0(x^t),\\
\begin{pmatrix}
0,e
\end{pmatrix}\xi=0,
\begin{pmatrix}
0,e_i
\end{pmatrix}\xi=0, i\in \mathcal{E}(y^t),\\
\begin{pmatrix}
e_i,0
\end{pmatrix}\xi=0, i \in a_{00}(\bar x,\bar y)\backslash\{\bar i\},
\begin{pmatrix}
0,e_i
\end{pmatrix}\xi=0, i \in a_{00}(\bar x,\bar y)\backslash\{\bar i\},\\
\begin{pmatrix}
e_{\bar i},\frac{x_{\bar i}^t}{y_{\bar i}^t} e_{\bar i}
\end{pmatrix}\xi=0,
\begin{pmatrix}
y^t_ie_i,x^t_ie_i
\end{pmatrix}\xi=0, i \in a_{01}(\bar x,\bar y)\cup a_{10}(\bar x,\bar y)
\end{array}
\right.\right\}.
\]
We again have, due to continuity arguments, that 
$\xi^{\bar i}_{t,n+\bar i} \not =0$.
For $\bar i \in a_{00}^y\left(\bar x,\bar y\right)$ we proceed
analogously by
considering $\mathcal{T}^{\mathcal{R}}_{\left(\bar x,\bar y\right)}$ again and replace two of the  involved equations 
$\begin{pmatrix}
e_{\bar i},0
\end{pmatrix}\xi=0$ and 
$\begin{pmatrix}
0,e_{\bar i}
\end{pmatrix}\xi=0$ by the equation
$\begin{pmatrix}
\lim\limits_{t \to 0}\frac{y_{\bar i}^t}{x_{\bar i}^t} e_{\bar i},e_{\bar i}
\end{pmatrix}\xi=0$. 
By the same arguments as before, we find
$\xi^{\bar i} \in \mathcal{T}^{\bar i}$ with
$\xi^{\bar i}_{\bar i} \ne 0$. Again we will assume $\xi^{\bar i}_{\bar i} = 1$ and find a sequence of vectors $\xi^{\bar i}_t \in \mathcal{T}^{ \bar i}_{t}$ that converges to $\xi^{\bar i}$ for $t \to 0$. Due to continuity, it holds then $\xi^{\bar i}_{t,\bar i} \ne 0$.

It is straightforward to verify the following observations for $t$ sufficiently small:

    a) Let $\left\{\xi^{\prime,1},\ldots,\xi^{\prime,\ell}\right\}$ be a base of $\mathcal{T}^{\prime}$, cf. Step 2b, then
    $\left\{\xi^{\prime,1},\ldots,\xi^{\prime,\ell}\right\}\cup \left\{\xi^{\bar i}_{t}\,\left\vert\,\bar i \in a_{00}(\bar x,\bar y)\right.\right\}$
is a set of linear independent vectors.
In fact, suppose for some coefficients $b_{i}, \in \R$, $i \in a_{00}\left(\bar x,\bar y\right),\beta_i \in \R, i=1,\ldots,\ell$ it holds:
\[
\sum\limits_{i \in a_{00}\left(\bar x,\bar y\right)} b_{i}\xi^{i}_{t}
+\sum\limits_{i=1}^{\ell}\beta_i\xi^{\prime,i}=0.
\]
For $\bar i \in a_{00}^x\left(\bar x,\bar y\right)$ we consider the $(n+\bar i)$-th row of this sum
\[
b_{\bar i}\underbrace{\xi^{\bar i}_{t,n+\bar i}}_{\ne 0}+\sum\limits_{i \in a_{00}\left(\bar x,\bar y\right)\backslash\left\{\bar i\right\}} b_{i}\underbrace{\xi^{i}_{t,n+\bar i}}_{=0}+\sum\limits_{i=1}^{\ell}\beta_i\underbrace{\xi^{\prime,i}_{n+\bar i}}_{=0}=0.
\]
If instead $\bar i \in a_{00}^y\left(\bar x,\bar y\right)$ we consider the $\bar i$-th row of the sum
\[
b_{\bar i}\underbrace{\xi^{\bar i}_{t,\bar i}}_{\ne 0}+\sum\limits_{i \in a_{00}\left(\bar x,\bar y\right)\backslash\left\{\bar i\right\}} b_{i}\underbrace{\xi^{i}_{t,\bar i}}_{=0}+\sum\limits_{i=1}^{\ell}\beta_i\underbrace{\xi^{\prime,i}_{\bar i}}_{=0}=0.
\]
Altogether, it must hold $b_{i} =0$, $i \in a_{00}\left(\bar x,\bar y\right)$. However, this implies
\[
\sum\limits_{i=1}^{\ell}\beta_i\xi^{\prime,i}=0.
\]
Hence, $\beta_i=0$ for $i=1,\ldots,\ell$.

    b) It holds $\xi^{\bar i}_{t} \in \mathcal{T}^\mathcal{S}_{\left(x^t,y^t\right)}$, cf. Step 1a, for any $\bar i \in a_{00}(\bar x,\bar y)$. 
    
    c) It holds $\left(\eta_i^{\le,t}-\eta_i^{\ge,t}\right)\xi^{\bar i}_{t,i}\xi^{\bar i}_{t,n+i} \leq 0, i \in \mathcal{H}\left(x^t,y^t\right)$. Since $\xi^{\bar i}_{t} \in \mathcal{T}^\mathcal{S}_{\left(x^t,y^t\right)}$, we obtain:
    \[
\begin{array}{rcl}
\left(\eta_i^{\le,t}-\eta_i^{\ge,t}\right)\xi^{\bar i}_{t,i}\xi^{\bar i}_{t,n+i}
&=&\left(\eta_i^{\ge,t}-\eta_i^{\le,t}\right)\left(\xi^{\bar i}_{t,n+i}\right)^2\frac{x^t_i}{y^t_i}.
\end{array}
\]
If $i\in \mathcal{H}^{\ge}\left(x^t,y^t\right)$,
we have $x_i^t<0,y_i^t>0$ and $\eta_i^{\le,t}=0$. Moreover, due to ND2, we have
$\eta_i^{\ge,t}>0$. The assertion follows immediately. The other case $i\in \mathcal{H}^{\le}\left(x^t,y^t\right)$ is completely analogous.

    d) It holds $\lim\limits_{t\to 0}\left(\eta_{\bar i}^{\le,t}-\eta_{\bar i}^{\ge,t}\right)\xi^{\bar i}_{t,\bar i}\xi^{\bar i}_{t,n+\bar i}=-\infty$. 
    We calculate:
\[
\begin{array}{rcl}
\left(\eta_{\bar i}^{\le,t}-\eta_{\bar i}^{\ge,t}\right)\xi^{\bar i}_{t,{\bar i}}\xi^{\bar i}_{t,n+{\bar i}}
&=&\left\{
\begin{array}{ll}
   \left(\eta_{\bar i}^{\ge,t}-\eta_{\bar i}^{\le,t}\right)\left(\xi^{\bar i}_{t,n+\bar i}\right)^2\frac{x^t_{\bar i}}{y^t_{\bar i}}& \mbox{for } \bar i \in a_{00}^x(\bar x,\bar y),\\
   \left(\eta_{\bar i}^{\ge,t}-\eta_{\bar i}^{\le,t}\right)\left(\xi^{\bar i}_{t,\bar i}\right)^2\frac{y^t_{\bar i}}{x^t_{\bar i}}& \mbox{for } \bar i \in a_{00}^y(\bar x,\bar y).
\end{array}\right.
\end{array}
\]
Let us suppose $\bar i \in a_{00}^x(\bar x,\bar y)$. We have $y^t_{\bar i}\ne 0$ and, thus, $\nu^t_{\bar i}=0$. We use Remark \ref{rem:multipliers} and NDT3 to conclude that the sequence
$ \left(\eta_{\bar i}^{\ge,t}-\eta_{\bar i}^{\le,t}\right)x^t_{\bar i}$ converges to $\varrho_{2,\bar i}<0$ for $t \to 0$. Further,
$\frac{1}{y^t_{\bar i}}>0$ tends to infinity for $t \to 0$. Finally,
$\left(\xi^{\bar i}_{t,n+\bar i}\right)^2$ converges to $1$ for $t \to 0$, due to the construction of
$\xi^{\bar i}_t$.
Thus, the assertion follows.
Instead, let us suppose $\bar i \in a_{00}^y(\bar x,\bar y)$. 
This time, we have that
$\left(\xi^{\bar i}_{t,\bar i}\right)^2$ converges to $1$ for $t \to 0$.
Due to Remark \ref{rem:multipliers} and NDT3, $\left(\eta_{\bar i}^{\ge,t}-\eta_{\bar i}^{\le,t}\right)y^t_{\bar i}$ converges to $\varrho_{1,\bar i}\ne 0$ for $t \to 0$.
If $\bar i\in \mathcal{H}^{\ge}\left(x^t,y^t\right)$, then $\varrho_{1,\bar i}$ is positive from here. Also, $\frac{1}{x^t_{\bar i}}<0$ tends to minus infinity for $t \to 0$. The other case $\bar i\in \mathcal{H}^{\le}\left(x^t,y^t\right)$ is completely analogous. 

e) We notice that $\displaystyle \xi^{\bar i ^T}_{t} D^2 L^\mathcal{R}\left( x^t, y^t\right)\xi^{\bar i}_{t}$ converges for $t \to 0$ due to the construction above.

Finally, for $\bar i\in  a_{00}(\bar x,\bar y)$ we estimate:
\[
\begin{array}{rcl}
\xi_t^{\bar i ^T} D^2 L^\mathcal{S}\left( x^t, y^t\right)\xi_t^{\bar i}&\overset{(\ref{eq:Hessetandbar})}{=}&
\displaystyle \xi^{\bar i ^T}_{t} D^2 L^\mathcal{R}\left(x^t, y^t\right)\xi^{\bar i}_{t}\\ && \displaystyle
- \sum\limits_{i\in \mathcal{H}^\ge\left(x^t,y^t\right)} 2 \eta_i^{\ge,t} \xi_{t,i}^{\bar i}  \xi_{t,n+i}^{\bar i}+
\sum\limits_{i\in \mathcal{H}^\le\left(x^t,y^t\right)} 2 \eta_i^{\le,t}\xi_{t,i}^{\bar i}\xi_{t,n+i}^{\bar i} \\
&= &
\displaystyle \xi^{\bar i ^T}_{t} D^2 L^\mathcal{R}\left(x^t, y^t\right)\xi^{\bar i}_{t}
 +2\sum\limits_{i\in \mathcal{H}\left(x^t,y^t\right)} \left( \eta_i^{\le,t}-\eta_i^{\ge,t} \right)\xi_{t,i}^{\bar i}  \xi_{t,n+i}^{\bar i}\\ 
&\overset{c)}{\le}&\displaystyle \xi^{\bar i ^T}_{t} D^2 L^\mathcal{R}\left(x^t, y^t\right)\xi^{\bar i}_{t}+2\left(\eta_{\bar i}^{\le,t}-\eta_{\bar i}^{\ge,t}\right)\xi^{\bar i}_{t,\bar i}\xi^{\bar i}_{t,n+\bar i}.
\end{array}
\]
Thus, due to d) and e), $\xi^{\bar i ^T}_{t} D^2 L^\mathcal{S}\left(x^t, y^t\right) \xi^{\bar i}_{t}$ has to be negative for $t$ small enough. 
As we have seen in Step 2c, it holds $\mathcal{T}^\prime \subset \mathcal{T}^\mathcal{S}_{\left(x^t,y^t\right)}$. Then, due a) and b), we have therefore:
\[
QI^\mathcal{S}_{t,\mathcal{T}^\mathcal{S}_{\left(x^t,y^t\right)}}
\geq QI^\mathcal{S}_{t,\mathcal{T}^\prime} + \left\vert a_{00}(\bar x,\bar y)\right\vert.
\]
By Step 2a, we have $QI^\mathcal{S}_{t,\mathcal{T}^\mathcal{R}_{\left(\bar x,\bar y\right)}}=\overline{QI}^\mathcal{R}_{\mathcal{T}^\mathcal{R}_{\left(\bar x,\bar y\right)}}$, and by Step 2b, $QI^\mathcal{S}_{t,\mathcal{T}^\mathcal{R}_{\left(\bar x,\bar y\right)}}=QI^\mathcal{S}_{t,\mathcal{T}^\prime}$. Overall, we obtain:
\[
m=QI^\mathcal{S}_{t,\mathcal{T}^\mathcal{S}_{\left(x^t,y^t\right)}}
\ge
\overline{QI}^\mathcal{R}_{\mathcal{T}^\mathcal{R}_{\left(\bar x,\bar y\right)}}+\left\vert a_{00}(\bar x,\bar y)\right\vert=TI.
\]
\end{proof}

Let us illustrate the necessity of NDT6 for the validity of Theorem \ref{thm:kktsequence}.

\begin{example}[Necessity of NDT6]
We consider the following Scholtes-type regularization $\mathcal{S}$ with $n=2$, $s=1$ and $0 < c_1 < c_2$:
\[
\begin{array}{rl}
\mathcal{S}: \quad \min\limits_{x,y}& (1+x_1)^2+(3-2x_2)^2+c_1y_1+c_2y_2\\
\mbox{s.t.} &x_1+x_2-1\ge 0, \\ &y_1+y_2\ge 1, \quad  
-t\le x_i y_i \le t,  \quad 0\leq y_i\le 1+\varepsilon, \quad i=1,2,
\end{array}
\]
as well as the point $\left(x^t,y^t\right)=(0,1,1,0)$.
We claim that this point is a nondegenerate Karush-Kuhn-Tucker point. Indeed, it holds:
\[
\begin{pmatrix}
2\\2\\c_1\\c_2
\end{pmatrix}=
\mu_{1,1}^t
\begin{pmatrix}
1\\1\\0\\0
\end{pmatrix}
+\mu_3^t
\begin{pmatrix}
0\\0\\1\\1
\end{pmatrix}
+\nu_2^t
\begin{pmatrix}
0\\0\\0\\1
\end{pmatrix}
\]
with the positive multipliers $\mu_{1,1}^t=2,\mu_3^t=c_1,\nu_2^t=c_2-c_1$. Obviously, LICQ and strict complementarity, i.e. ND1 and ND2, respectively, are fulfilled. We show that $D^2 L^\mathcal{S}(x^t,y^t)\restriction_{\mathcal{T}^{\mathcal{S}}_{(x^t,y^t)}}$ is nonsingular and calculate the number of its negative eigenvalues. The tangent space is
$\mathcal{T}^{\mathcal{S}}_{(x^t,y^t)}=\left\{\xi \in \R^{4}\,\left\vert\, 
\xi_1+\xi_2=0, \xi_3=\xi_4=0
\right.\right\}$.
For the Hessian of the corresponding Lagrange function we have:

\[
D^2 L^\mathcal{S}(x^t,y^t)
=\begin{pmatrix}2&0&0&0\\
0&-4&0&0\\
0&0&0&0\\
0&0&0&0
\end{pmatrix}.
\]
Thus, for $\xi \in  \mathcal{T}^{\mathcal{S}}_{(x^t,y^t)}$ it holds:
\[
\xi^T D^2 L^\mathcal{S}(x^t,y^t) \xi=2\xi_1^2-4\xi_2^2=-2\xi_1^2.
\]
Hence, ND3 is also fulfilled, the Karush-Kuhn-Tucker point $\left(x^t,y^t\right)$ is nondegenerate and its quadratic index equals one, i.e. $m=1$ in Theorem \ref{thm:kktsequence}. The limiting point is $(\bar x,\bar y)=(0,1,1,0)$. This point is T-stationary for the corresponding regularized continuous reformulation $\mathcal{R}$ according to Theorem \ref{thm:regul}, since MPOC-LICQ is fulfilled.
Indeed, we have:
\[
\begin{pmatrix}
2\\2\\c_1\\c_2
\end{pmatrix}=
\bar \mu_{1,1}
\begin{pmatrix}
1\\1\\0\\0
\end{pmatrix}
+\bar \mu_3
\begin{pmatrix}
0\\0\\1\\1
\end{pmatrix}
+\bar \sigma_{1,1}
\begin{pmatrix}
1\\0\\0\\0
\end{pmatrix}
+\bar \sigma_{2,2}
\begin{pmatrix}
0\\0\\0\\1
\end{pmatrix}
\]
with the unique multipliers $\bar \mu_{1,1}=2,\bar\mu_3=c_1,\bar \sigma_{1,1}=0,\bar \sigma_{2,2}=c_2-c_1.$
It is easy to see that this point is nondegenerate with vanishing T-index, i.e. $TI=0$, since $a_{00}(\bar x,\bar y)=\emptyset$ and $\mathcal{T}^{\mathcal{R}}_{(\bar x,\bar y)}=\{0\}$. Note that additionally $\left\{i\in a_{01}\left(\bar x,\bar y\right)\,\left\vert\,\bar \sigma_{1,i}=0\right.\right\}=\{1\}$.
Although all assumptions of Theorem \ref{thm:kktsequence} are fulfilled, we have here:
\[
   TI=\max\left\{m - \left\vert\left\{i\in a_{01}\left(\bar x,\bar y\right)\,\left\vert\,\bar \sigma_{1,i}=0\right.\right\} \right\vert, 0\right\}.
\]
With other words, the saddle points of the Scholtes-type regularization $\mathcal{S}$ approximate a minimizer of the regularized continuous reformulation $\mathcal{R}$. The reason is that the $\sigma$-multipliers corresponding to zero $x$- and nonzero $y$-variables vanish.
The lower bound given in Theorem \ref{thm:kktsequence} is attained.
\qed
\end{example}

Next, we point out that the assumption NDT6 is not restrictive at all.

\begin{remark}[Genericity for NDT6]
\label{rem:ndt6}
    Let us briefly sketch why condition NDT6 must be generically fulfilled at the T-stationary points of $\mathcal{R}$. First, we note that all T-stationary points of $\mathcal{R}$ are generically nondegenerate, see \cite{laemmel:reform}. 
    Now, let us count the losses of freedom induced by the definition of a T-stationary point. For feasibility we have $\left\vert P\right\vert$ equality constraints, $\left\vert Q_0\right\vert$ active inequality constraints, $\left\vert\mathcal{E}\right\vert$ bounding constraints on the $y$-variables, eventually one summation constraint, and $\left\vert a_{01}\right\vert+\left\vert a_{10}\right\vert+2\left\vert a_{00}\right\vert$ orthogonality type constraints. Additional losses of freedom come from the T-stationarity condition. They amount to $2n-\left\vert P\right\vert-\left\vert Q_0\right\vert-\left\vert\mathcal{E}\right\vert-1-\left\vert a_{01}\right\vert-\left\vert a_{10}\right\vert-2\left\vert a_{00}\right\vert$ if the summation constraint is active, and to $2n-\left\vert P\right\vert-\left\vert Q_0\right\vert-\left\vert\mathcal{E}\right\vert-\left\vert a_{01}\right\vert-\left\vert a_{10}\right\vert-2\left\vert a_{00}\right\vert$ otherwise. 
In both cases, the losses of freedom are equal to the number of variables $2n$. The violation of NDT6 would produce an additional loss of freedom, which would imply that the total available degrees of freedom $2n$ are exceeded. By virtue
of the structured jet transversality theorem from \cite{guenzel:2008}, this cannot happen generically. \qed
\end{remark}

Now, we prove that the Scholtes-type regularization method is well-defined.

\begin{theorem}[Well-posedness of $\mathcal{S}$ from $\mathcal{R}$]
   Let $(\bar x, \bar y)$ be a nondegenerate T-stationary point of $\mathcal{R}$ with T-index $m$, additionally, fulfilling NDT6. Then, for all sufficiently small $t$ there exists a nondegenerate Karush-Kuhn-Tucker point $(x^t,y^t)$ of $S$ within a neighborhood of $(\bar x, \bar y)$, which has the same quadratic index $m$. 
\end{theorem}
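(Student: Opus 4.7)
The plan is to construct $(x^t,y^t)$ and its multipliers as a branch emanating from the T-stationary data at $(\bar x,\bar y)$ via the implicit function theorem. The first step is to use the nonvanishing of the $\sigma$- and $\varrho$-multipliers guaranteed by NDT6 and NDT3 to predict which of the constraints $x_iy_i=\pm t$, $y_i=0$ of $\mathcal{S}(t)$ are active at $(x^t,y^t)$, and on which side. Concretely: for $i\in a_{01}(\bar x,\bar y)$ the sign of $\bar\sigma_{1,i}$ fixes the active branch of $x_i^ty_i^t=\pm t$, cf.\ Remark~\ref{rem:multipliers}(b); for $i\in a_{00}(\bar x,\bar y)$ the signs of $\bar\varrho_{1,i}$ and $\bar\varrho_{2,i}$ fix the active branch of $x_i^ty_i^t=\pm t$ and, via Remark~\ref{rem:multipliers}(d) combined with $\nu_i^t\ge 0$, force $y_i^t>0$ and hence $\nu_i^t=0$; for $i\in a_{10}(\bar x,\bar y)$ one keeps $y_i^t=0$. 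The sets $Q_0(\bar x)$, $\mathcal{E}(\bar y)$ and the summation constraint are carried over by continuity.

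With this prescribed active pattern the KKT conditions of $\mathcal{S}(t)$ collapse to a square nonlinear system $F(z;t)=0$ in the unknowns $z=(x,y,\lambda,\mu,\eta,\nu)$, whose value at $t=0$ is the T-stationary data $z_0=(\bar x,\bar y,\bar\lambda,\bar\mu,\bar\sigma,\bar\varrho)$; the dimension count matches thanks to Lemma~\ref{lem:a01}. The main obstacle lies precisely in the biactive block $a_{00}(\bar x,\bar y)$, where $\bar x_i=\bar y_i=0$, the active equation $x_iy_i=\pm t$ has vanishing linearization at $t=0$, and the corresponding $\eta$-multipliers must blow up like $1/\sqrt{t}$ according to Remark~\ref{rem:multipliers}(d). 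I would resolve this by introducing, only on the biactive block, the rescaling $x_i=\sqrt{t}\,\tilde x_i$, $y_i=\sqrt{t}\,\tilde y_i$, $\eta_i=\tilde\eta_i/\sqrt{t}$, in which the active constraint becomes $\tilde x_i\tilde y_i=\pm 1$ and all quantities stay $O(1)$; the leading-order values of $\tilde x_i,\tilde y_i$ are pinned down by the ratio $\bar\varrho_{2,i}/\bar\varrho_{1,i}$ from NDT3. In the rescaled variables the Jacobian $\partial_zF(z_0;0)$ is a standard bordered matrix whose nonsingularity is equivalent to linear independence of the active constraint gradients together with nonsingularity of the Lagrangian Hessian on the associated tangent space — that is, exactly MPOC-LICQ (NDT1) and NDT4. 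The implicit function theorem then yields the desired smooth branch $t\mapsto(x^t,y^t,\ldots)$ converging to $(\bar x,\bar y)$.

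It remains to verify that $(x^t,y^t)$ is nondegenerate with quadratic index $m$. ND1 follows from Theorem~\ref{thm:mpoc-licq}. Strict complementarity ND2 is a consequence of Remark~\ref{rem:multipliers} combined with the strict signs in NDT2, NDT3 and NDT6: the $\mu$-multipliers inherit strict positivity by NDT2 and continuity; the multiplier $\eta^{\ge,t}_i$ or $\eta^{\le,t}_i$ selected for $i\in a_{01}$ stays strictly positive because $\bar\sigma_{1,i}\ne 0$ and $\bar y_i>0$; the analogous nonvanishing for $i\in a_{00}$ follows from NDT3 together with the $\sqrt{t}$-scaling just described; and the $\nu^t_i$ for $i\in a_{10}(\bar x,\bar y)$ is handled by the sign of the corresponding limit quantity in Remark~\ref{rem:multipliers}(c). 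Finally, ND3 and the equality of the quadratic index with $m$ follow directly from Theorem~\ref{thm:kktsequence}: since NDT6 holds at the nondegenerate T-stationary point $(\bar x,\bar y)$ of T-index $m$, the sequence of nondegenerate KKT points we have just constructed must have $QI=TI=m$.
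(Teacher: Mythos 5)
Your overall strategy -- prescribing the active pattern from the signs of the limiting multipliers, collapsing the KKT system to a square system, rescaling the biactive block by $\sqrt{t}$, and invoking the implicit function theorem with the bordered-matrix criterion (nonsingularity $\Leftrightarrow$ MPOC-LICQ plus NDT4) -- is exactly the paper's route. However, there are two genuine gaps.

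First, your treatment of $a_{10}(\bar x,\bar y)$ is wrong: you keep $y_i^t=0$ for \emph{all} $i\in a_{10}(\bar x,\bar y)$. By Remark~\ref{rem:multipliers}c), if only the bound $y_i=0$ is active then $\nu_i^t\to\bar\sigma_{2,i}$, so for indices with $\bar\sigma_{2,i}<0$ this contradicts $\nu_i^t\ge 0$ (and a fortiori strict complementarity ND2). For those indices the correct pattern is that the relaxed orthogonality constraint $x_iy_i=\mathrm{sgn}(\bar x_i)\,t$ is active with $y_i^t>0$; this is why the paper splits $a_{10}$ into $a_{10}^<$ and $a_{10}^>$ and uses different feasibility equations on each part. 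Moreover, NDT6 says nothing about $\bar\sigma_{2,i}$, so one must first \emph{prove} $\bar\sigma_{2,i}\ne 0$ for $i\in a_{10}(\bar x,\bar y)$ -- the paper does this via Lemma~\ref{lem:a01}c) and the assumption that the components of $c$ are pairwise different. Your proposal omits this step, and without it the sign-based case distinction (and ND2 for the $\nu$-multipliers) cannot even be set up.

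Second, and more seriously, your argument for ND3 is circular. Theorem~\ref{thm:kktsequence} takes as a \emph{hypothesis} that the Karush-Kuhn-Tucker points $(x^t,y^t)$ are nondegenerate with quadratic index $m$; it cannot be used to establish ND3 for the points you have just constructed. The paper closes this gap with a substantial separate argument: it builds an explicit basis of $\mathcal{T}^{\mathcal{S}}_{(x^t,y^t)}$ consisting of $2n-\alpha^{\mathcal{R}}$ perturbed eigenvectors of $D^2L^{\mathcal{R}}(\bar x,\bar y)\restriction_{\mathcal{T}^{\mathcal{R}}_{(\bar x,\bar y)}}$ plus one extra vector per index in $a_{00}(\bar x,\bar y)$, and shows that the quadratic form $\left(\xi^t\right)^T D^2L^{\mathcal{S}}(x^t,y^t)\,\xi^t$ converges to a nonzero eigenvalue on the first group while diverging to $-\infty$ on the second (the $1/\sqrt{t}$ blow-up of the $\eta$-multipliers on the biactive block being the driving mechanism). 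This simultaneously yields ND3 and the count $QI^{\mathcal{S}}_t=\overline{QI}^{\mathcal{R}}+\left\vert a_{00}(\bar x,\bar y)\right\vert=m$. Some version of this computation is indispensable and is missing from your proposal.
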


\begin{proof}
First, we show that for all $i\in a_{10}\left(\bar x,\bar y\right)$ it holds $\bar \sigma_{2,i}\ne 0$. 
Assume contrarily that $\bar \sigma_{2,\bar i}= 0$ for some $\bar i \in a_{10}\left(\bar x,\bar y\right)$. We then have due to T-stationarity, cf. (\ref{eq:k-throw}):
\[
c_{\bar i}=\bar \sigma_{2,\bar i}+\bar \mu_3=\bar \mu_3.
\]
Moreover, we have in view of Lemma \ref{lem:a01}c) an index $\tilde i \in a_{01}\left(\bar x, \bar y\right)\backslash \mathcal{E}\left(\bar y\right).$ Thus it holds, cf. (\ref{eq:k-throw}),
$c_{\tilde i}=\bar \mu_3$.
Due to the assumption on $c$, we have $\bar i=\tilde i$, a contradiction. Hence, we may write:
\[
a_{10}\left(\bar x,\bar y\right)=
\left\{i\in a_{10}\left(\bar x,\bar y\right)\,\left\vert\,\bar \sigma_{2,i}<0\right.\right\}\cup\left\{i\in a_{10}\left(\bar x,\bar y\right)\,\left\vert\,\bar \sigma_{2,i}>0\right.\right\}= a_{10}^<\left(\bar x,\bar y\right) \cup a_{10}^>\left(\bar x,\bar y\right).
\]
Due to NDT6 and NDT3, we may split the other index sets as follows:
\[
a_{01}\left(\bar x,\bar y\right)=
\left\{i\in a_{01}\left(\bar x,\bar y\right)\,\left\vert\,\bar \sigma_{1,i}<0\right.\right\}\cup\left\{i\in a_{01}\left(\bar x,\bar y\right)\,\left\vert\,\bar \sigma_{1,i}>0\right.\right\} = a_{01}^<\left(\bar x,\bar y\right) \cup a_{01}^>\left(\bar x,\bar y\right),
\]
\[
a_{00}\left(\bar x,\bar y\right)=
\left\{i\in a_{00}\left(\bar x,\bar y\right)\,\left\vert\,\bar \varrho_{1,i}<0\right.\right\}\cup\left\{i\in a\left(\bar x,\bar y\right)\,\left\vert\,\bar \varrho_{1,i}>0\right.\right\}= a_{00}^<\left(\bar x,\bar y\right) \cup a_{00}^>\left(\bar x,\bar y\right).
\]

We consider the auxiliary system of equations $F(t,x,y,\lambda,\mu,\sigma,\varrho) =0$ given by (\ref{eq:ift-stat})-(\ref{eq:ift-f5}), which mimics stationarity and feasibility. For stationarity we use:
\begin{equation}
\label{eq:ift-stat}
- L\left(t,x,y,\lambda,\mu,\sigma,\varrho\right)=0,
\end{equation}
where
\[
   \begin{array}{rcl}
    L&=&
  \begin{pmatrix}
   \nabla f(x)\\
   c
  \end{pmatrix}- \displaystyle\sum\limits_{p \in P} \lambda_p 
  \begin{pmatrix}
  \nabla h_p(x)\\
  0
  \end{pmatrix}-
    \sum\limits_{q \in Q_0(\bar x)}\mu_{1,q} 
    \begin{pmatrix}
    \nabla g_q(x)\\
    0
    \end{pmatrix}+
    \sum\limits_{i\in\mathcal{E}(\bar y)} \mu_{2,i} \begin{pmatrix}
    0\\
    e_i
\end{pmatrix}\\ 
    && \displaystyle -
 \mu_3 \begin{pmatrix}
0\\
e
\end{pmatrix} -\sum\limits_{i \in a_{01}\left(\bar x,\bar y\right)} \frac{\sigma_{1,i}}{\bar y_{i}}
    \begin{pmatrix}
    y_{i}e_{ i}\\
    x_{i}e_{ i}
    \end{pmatrix}
    -\sum\limits_{i \in a_{10}^<\left(\bar x,\bar y \right)}   \frac{\sigma_{2,i}}{\bar x_{i}}  \begin{pmatrix}
    y_{i}e_{ i}\\
    x_{i}e_{ i}
    \end{pmatrix}
     \\ 
    &&\displaystyle -\sum\limits_{i \in a_{10}^>\left(\bar x,\bar y \right) }  \sigma_{2,i}    \begin{pmatrix}
    0\\
    e_{ i}
    \end{pmatrix}-\sum\limits_{i \in a_{00}\left(\bar x, \bar y\right)} \left( \varrho_{1,i}
    \begin{pmatrix}
    e_{ i}\\
   0
    \end{pmatrix}
   + \varrho_{2,i}
    \begin{pmatrix}
    0\\
    e_{ i}
    \end{pmatrix}\right).
    \end{array}
\]
For feasibility we use:
\begin{equation}
\label{eq:ift-f1}
      h_p(x)=0, p\in P, \quad
     g_q(x)=0, q \in Q_0\left(\bar x\right),
\end{equation}
\begin{equation}
\label{eq:ift-f1a}
    1+\varepsilon-y_i=0, i \in \mathcal{E}\left(\bar y\right), \quad  
   \displaystyle \sum_{i=1}^n y_i -(n-s)=0,
\end{equation}
\begin{equation}
\label{eq:ift-f2}
\frac{1}{\bar y_i}\left(x_iy_i-t\right)=0, i\in a_{01}^<\left(\bar x,\bar y\right), \quad
    \frac{1}{\bar y_i}\left(x_iy_i+t\right)=0, i\in a_{01}^>\left(\bar x,\bar y\right),     
\end{equation}
\begin{equation}
\label{eq:ift-f3}
\frac{1}{\vert\bar x_i \vert}\left(\mbox{sgn}\left(\bar x_i\right)x_iy_i-t\right)=0, i \in  a_{10}^<\left(\bar x, \bar y\right), \quad y_i=0, i \in a_{10}^>\left(\bar x, \bar y\right),   
\end{equation}
\begin{equation}
\label{eq:ift-f4}
   x_i+\displaystyle \frac{\varrho_{2,i}\sqrt{t}}{\sqrt{\varrho_{1,i}\varrho_{2,i}}}=0,
   i \in a_{00}^<\left(\bar x,\bar y\right), \quad
   x_i-\displaystyle \frac{\varrho_{2,i}\sqrt{t}}{\sqrt{-\varrho_{1,i} \varrho_{2,i}}}=0, 
    i \in a_{00}^>\left(\bar x,\bar y\right),
\end{equation}
\begin{equation}
\label{eq:ift-f5}
     y_i+\displaystyle \frac{\varrho_{1,i}\sqrt{t}}{\sqrt{\varrho_{1,i} \varrho_{2,i}}}=0,
      i \in a_{00}^<\left(\bar x,\bar y\right), \quad 
   y_i-\displaystyle \frac{\varrho_{1,i}\sqrt{t}}{\sqrt{-\varrho_{1,i} \varrho_{2,i}}}=0, i \in  a_{00}^>\left(\bar x,\bar y\right).
\end{equation}
In view of feasibility and T-stationarity of $\left(\bar x,\bar y\right)$ for $\mathcal{R}$, the vector
$(0,\bar x,\bar y,\bar \lambda,\bar \mu,\bar \sigma,\bar \varrho)$ solves (\ref{eq:ift-stat})-(\ref{eq:ift-f5}).
We consider the Jacobian matrix
$
D F (t,x,y,\lambda,\mu, \sigma, \rho)=\begin{bmatrix}
A&B\\
B^T&D
\end{bmatrix}$,
where 

\[
\begin{array}{rcl}
      A&=& \displaystyle
    -D^2f(x)
+\sum\limits_{p\in P}\lambda_p
    D^2h_p(x)
+\sum\limits_{q\in Q_0\left(\bar x\right)}\mu_{1,q}
D^2g_q(x) \\
&& \displaystyle +
\sum\limits_{i\in a_{01}\left(\bar x,\bar y\right)}\frac{1}{\bar y_{i}}E(i) +
\sum\limits_{i\in a_{10}^<\left(\bar x,\bar y\right)}\frac{1}{\bar x_{i}}E(i),
\end{array}
\]
the columns of $B$ are give by the vectors:
\[
\begin{pmatrix}
    \nabla h_p(x)\\0
\end{pmatrix},p \in P,\quad
\begin{pmatrix}
    \nabla g_q(x)\\0
\end{pmatrix},q\in Q_0\left(\bar x\right),\quad
\begin{pmatrix}
    0\\-e_i
\end{pmatrix},i \in \mathcal{E}\left(\bar y\right),\quad
\begin{pmatrix}
    0\\e
\end{pmatrix},
\]
\[
\begin{pmatrix}
   \frac{y_{i}}{\bar y_i}e_{i}\\
   \frac{x_{i}}{\bar y_i}e_{i}
\end{pmatrix},i \in a_{01}\left(\bar x,\bar y\right),\quad
\begin{pmatrix}
  \frac{y_{i}}{\bar x_i}e_{i}\\
  \frac{x_{i}}{\bar x_i}e_{i}
\end{pmatrix},i \in a_{10}^<\left(\bar x,\bar y\right), \quad
\begin{pmatrix}
0\\e_{i}
\end{pmatrix},i \in a_{10}^>\left(\bar x,\bar y\right),
\]
\[
\begin{pmatrix}
   e_{i}\\
   0
\end{pmatrix}, 
\begin{pmatrix}
  0\\
  e_{i}
\end{pmatrix},i \in a_{00}\left(\bar x,\bar y\right),
\]
and $D$ consists of $\vert P \vert+\left\vert Q_0\left(\bar x\right)\right\vert+\left\vert\mathcal{E}\left(\bar y\right)\right\vert+1+\left\vert a_{01}\left(\bar x,\bar y\right)\right\vert+\left\vert a_{10}\left(\bar x,\bar y\right)\right\vert$ vanishing rows. The remaining rows of $D$ are given by the vectors:
\[
\begin{pmatrix}
0\\
    -\frac{\sqrt{-\varrho_{2,i}t}}{2\sqrt{-\varrho_{1,i}^3}}e_i\\
    \frac{\sqrt{t}}{2\sqrt{\varrho_{1,i}\varrho_{2,i}}}e_i
\end{pmatrix}, i \in a_{10}^<\left(\bar x,\bar y\right),\quad
\begin{pmatrix}
0\\
    \frac{\sqrt{-\varrho_{2,i}t}}{2\sqrt{\varrho_{1,i}^3}}e_i\\
    -\frac{\sqrt{t}}{2\sqrt{-\varrho_{1,i}\varrho_{2,i}}}e_i
\end{pmatrix}, i \in a_{10}^>\left(\bar x,\bar y\right),
\]
\[
\begin{pmatrix}
0\\
    \frac{\sqrt{t}}{2\sqrt{\varrho_{1,i}\varrho_{2,i}}}e_i\\
    -\frac{\sqrt{-\varrho_{1,i}t}}{2\sqrt{-\varrho_{2,i}^3}}e_i
\end{pmatrix}, i \in a_{10}^<\left(\bar x,\bar y\right),\quad
\begin{pmatrix}
0\\
-\frac{\sqrt{t}}{2\sqrt{-\varrho_{1,i}\varrho_{2,i}}}e_i\\
    -\frac{\sqrt{\varrho_{1,i}t}}{2\sqrt{-\varrho_{2,i}^3}}e_i
\end{pmatrix}, i \in a_{10}^>\left(\bar x,\bar y\right).
\]
Additionally we have $D=0$ at $(0,\bar x,\bar y,\bar \lambda,\bar \mu,\bar \sigma,\bar \varrho)$. Hence, we can apply Theorem 2.3.2 from \cite{Jongen:2004}, which says that
$
D F (0,\bar x, \bar y, \bar \lambda, \bar \mu, \bar \sigma, \bar \rho)=
\begin{bmatrix}
A&B\\
B^T&0
\end{bmatrix}$ is nonsingular if and only if
$\xi^TA\xi\ne0$ for all $\xi \in B^\perp$, the orthogonal complement of the subspace spanned by the columns of $B$. In view of $B^\perp=\mathcal{T}^{\mathcal{R}}_{\left(\bar x,\bar y\right)}$, we check:
\[
\begin{array}{rcl}
\xi^TA\xi&=&\xi^T\left(-D^2L^\mathcal{R}\left(\bar x,\bar y\right)+\displaystyle \sum\limits_{i\in a_{01}\left(\bar x,\bar y\right)}\frac{1}{\bar y_{i}}E(i) +
\sum\limits_{i\in a_{10}^<\left(\bar x,\bar y\right)}\frac{1}{\bar x_{i}}E(i)\right)\xi\\
&=&-\underbrace{\xi^TD^2L^\mathcal{R}\left(\bar x,\bar y\right)\xi}_{\ne 0\mbox{\footnotesize \ due to NDT4}}
+\displaystyle \sum\limits_{i\in a_{01}\left(\bar x,\bar y\right)}\frac{1}{\bar y_{i}}2\xi_{i}\underbrace{\xi_{n+i}}_{=0}+
\sum\limits_{i\in a_{10}^<\left(\bar x,\bar y\right)}\frac{1}{\bar x_{i}}
2\xi_{i}\underbrace{\xi_{n+i}}_{=0}.
\end{array}
\]
Hence, by means of the implicit function theorem we obtain for any $t>0$ sufficiently small a solution $\left(t,x^t,y^t,\lambda^t,\mu^t,\sigma^t,\varrho^t\right)$ of the system of equations (\ref{eq:ift-stat})-(\ref{eq:ift-f5}).

By choosing $t$ even smaller, if necessary, we can ensure due to continuity reasons as well as  NDT2, NDT3, and NDT6  that the following holds:

(i) $g_q\left(x^t\right)>0$, $q \in Q\backslash Q_0\left(\bar x\right)$ and   $\mu_{1,q}^t>0$, $q \in Q_0\left(\bar x\right)$,

   (ii) $ \mbox{sgn}\left(\sigma^t_{1,i}\right)=\mbox{sgn}\left(\bar \sigma_{1,i}\right)$, $i\in a_{01}\left(\bar x,\bar y\right)$,
   
    (iii)
  $ \mbox{sgn}\left(\sigma^t_{2,i}\right)=\mbox{sgn}\left(\bar \sigma_{2,i}\right)$, $\mbox{sgn}\left(x^t_{i}\right)= \mbox{sgn}\left(\bar x_{i}\right)$, $i \in a_{10}\left(\bar x,\bar y\right)$,
  
    (iv) $ \mbox{sgn}\left(\varrho^t_{1,i}\right)=\mbox{sgn}\left(\bar \varrho_{1,i}\right)$,
  $\varrho_{2,i}^t<0$, $i\in a_{00}\left(\bar x,\bar y\right)$,
    
    (v) $y_i^t\ge 0$, $i \in \left\{1,\ldots,n\right\}$ and $y^t_i<1+\varepsilon$, $i\in \left\{1,\ldots,n\right\}\backslash\mathcal{E}\left(\bar y\right)$.

\noindent
From here it is straightforward to see that $\left(x^t,y^t\right)$ is feasible for $\mathcal{S}$ and we have:

    (i) $Q_0\left(x^t\right)=Q_0\left(\bar x\right)$,
  $\mathcal{E}\left(y^t\right)=\mathcal{E}\left(\bar y\right)$,
    $\mathcal{N}\left(y^t\right)= a^>_{10}\left(\bar x, \bar y\right)$, $\mathcal{O}\left(x^t,y^t\right)=\emptyset$,
    
    (ii) $\mathcal{H}^{\ge}\left(x^t,y^t\right)=a^>_{01}\left(\bar x,\bar y\right)\cup \left\{i \in a^<_{10}\left(\bar x, \bar y\right)\,\left\vert\, x^t_{i} <0\right.\right\} \cup 
    a^>_{00}\left(\bar x,\bar y\right)$,
    
    (iii) $\mathcal{H}^{\le}\left(x^t,y^t\right)=a^<_{01}\left(\bar x,\bar y\right)\cup \left\{i \in a^<_{10}\left(\bar x, \bar y\right)\,\left\vert\, x^t_{i}>0\right.\right\} \cup a^<_{00}\left(\bar x,\bar y\right)$.

Thus, it holds:
\[
   \begin{array}{rcl}
  \begin{pmatrix}
   \nabla f\left(x^t\right)\\
   c
  \end{pmatrix}&=& \displaystyle\sum\limits_{p \in P} \lambda_{p}^t 
  \begin{pmatrix}
  \nabla h_p\left(x^t\right)\\
  0
  \end{pmatrix}+
    \sum\limits_{q \in Q_0(\bar x)}\mu_{1,q}^t 
    \begin{pmatrix}
    \nabla g_q\left(x^t\right)\\
    0
    \end{pmatrix}-
    \sum\limits_{i\in\mathcal{E}(\bar y)} \mu_{2,i}^t \begin{pmatrix}
    0\\
    e_i
\end{pmatrix}\\ 
    && +
 \mu_3^t \begin{pmatrix}
0\\
e
\end{pmatrix}\displaystyle +\sum\limits_{i \in a_{01}\left(\bar x,\bar y\right)} \sigma_{1,i}^t\frac{1}{\bar y_{i}}
    \begin{pmatrix}
    y_{i}^te_{ i}\\
    x_{i}^te_{ i}
    \end{pmatrix}
    +\sum\limits_{i \in a_{10}^<\left(\bar x,\bar y \right)}  \sigma_{2,i}^t  \frac{1}{\bar x_{i}}  \begin{pmatrix}
    y_{i}^te_{ i}\\
    x_{i}^te_{ i}
    \end{pmatrix}\\ 
    &&\displaystyle
     +\sum\limits_{i \in a^>_{10}\left(\bar x,\bar y \right)}  \sigma_{2,i}^t    \begin{pmatrix}
    0\\
    e_{ i}
    \end{pmatrix}\displaystyle +\sum\limits_{i \in a_{00}\left(\bar x, \bar y\right)} \left( \varrho^t_{1,i}
    \begin{pmatrix}
     e_{ i}\\
   0
    \end{pmatrix}
   + \varrho^t_{2,i}
    \begin{pmatrix}
     0\\
    e_{ i}
    \end{pmatrix}\right)\\
&=&\displaystyle\sum\limits_{p \in P} \lambda_{p}^t 
  \begin{pmatrix}
  \nabla h_p\left(x^t\right)\\
  0
  \end{pmatrix}+
    \sum\limits_{q \in Q_0(x^t)}\mu_{1,q}^t 
    \begin{pmatrix}
    \nabla g_q\left(x^t\right)\\
    0
    \end{pmatrix}-
    \sum\limits_{i\in\mathcal{E}(y^t)} \mu_{2,i}^t \begin{pmatrix}
    0\\
    e_i
\end{pmatrix}
 \\ 
    && \displaystyle+
 \mu_3^t \begin{pmatrix}
0\\
e
\end{pmatrix}
+ \sum\limits_{\substack{i\in\mathcal{H}^\ge\left(x^t,y^t\right),\\
i \in a_{01}\left(\bar x, \bar y\right)}}\sigma_{1,i}^t\frac{1}{\bar y_{i}}\begin{pmatrix}
    y_i^te_i\\
   x_i^te_i
\end{pmatrix}
+ \sum\limits_{\substack{i\in\mathcal{H}^\ge\left(x^t,y^t\right),\\
i \in a_{10}\left(\bar x, \bar y\right)}}\sigma_{2,i}^t  \frac{1}{\bar x_{i}}  \begin{pmatrix}
    y_i^te_i\\
   x_i^te_i
\end{pmatrix} \\ &&\displaystyle
+ \sum\limits_{\substack{i\in\mathcal{H}^\ge\left(x^t,y^t\right),\\
i \in a_{00}\left(\bar x, \bar y\right)}}\displaystyle \frac{\sqrt{-\varrho^t_{1,i} \varrho^t_{2,i}}}{\sqrt{t}} \begin{pmatrix}
    y_i^te_i\\
   x_i^te_i
\end{pmatrix}+ \sum\limits_{\substack{i\in\mathcal{H}^\le\left(x^t,y^t\right),\\
i \in a_{01}\left(\bar x, \bar y\right)}}\sigma_{1,i}^t\frac{1}{\bar y_{i}}\begin{pmatrix}
    y_i^te_i\\
   x_i^te_i
\end{pmatrix}
\\ 
    && \displaystyle

+ \sum\limits_{\substack{i\in\mathcal{H}^\le\left(x^t,y^t\right),\\
i \in a_{10}\left(\bar x, \bar y\right)}}\sigma_{2,i}^t  \frac{1}{\bar x_{i}}  \begin{pmatrix}
    y_i^te_i\\
   x_i^te_i
\end{pmatrix}
+ \sum\limits_{\substack{i\in\mathcal{H}^\le\left(x^t,y^t\right),\\
i \in a_{00}\left(\bar x, \bar y\right)}}\displaystyle -\frac{\sqrt{\varrho^t_{1,i} \varrho^t_{2,i}}}{\sqrt{t}} \begin{pmatrix}
    y_i^te_i\\
   x_i^te_i
\end{pmatrix}
\\ 
&&+
    \displaystyle \sum\limits_{i\in\mathcal{N}\left( y^t\right)}  \sigma_{2,i}^t \begin{pmatrix}
    0\\
    e_i
\end{pmatrix}.
\end{array}
\]
We rename the multipliers as follows:
\[
\eta^{\ge,t}_i=\left\{
\begin{array}{ll}
\displaystyle \sigma_{1,i}^t\frac{1}{\bar y_{i}},& \mbox{for }i\in \mathcal{H}^\ge\left(x^t,y^t\right)\cap a_{01}\left(\bar x, \bar y\right),\\ 
\displaystyle \sigma_{2,i}^t\frac{1}{\bar x_{i}},& \mbox{for }i\in \mathcal{H}^\ge\left(x^t,y^t\right)\cap a_{10}\left(\bar x, \bar y\right),\\ 
\displaystyle \frac{\sqrt{-\varrho^t_{1,i} \varrho^t_{2,i}}}{\sqrt{t}},& \mbox{for }i\in \mathcal{H}^\ge\left(x^t,y^t\right)\cap a_{00}\left(\bar x, \bar y\right),\\
0,&\mbox{else,}
\end{array}\right.
\]
\[
\eta^{\le,t}_i=\left\{
\begin{array}{ll}
\displaystyle -\sigma_{1,i}^t\frac{1}{\bar y_{i}},& \mbox{for }i\in \mathcal{H}^\le\left(x^t,y^t\right)\cap a_{01}\left(\bar x, \bar y\right),\\ 
\displaystyle -\sigma_{2,i}^t\frac{1}{\bar x_{i}},& \mbox{for }i\in \mathcal{H}^\le\left(x^t,y^t\right)\cap a_{10}\left(\bar x, \bar y\right),\\ 
\displaystyle \frac{\sqrt{\varrho^t_{1,i}\varrho^t_{2,i}}}{\sqrt{t}},& \mbox{for }i\in \mathcal{H}^\le\left(x^t,y^t\right)\cap a_{00}\left(\bar x, \bar y\right),\\ 
0,&\mbox{else,}
\end{array}\right.
\]
\[
\nu^t_i=\left\{
\begin{array}{ll}
\displaystyle \sigma_{2,i}^t,& \mbox{for }i\in \mathcal{N}\left(y^t\right)\cap a_{10}\left(\bar x, \bar y\right),\\ 
0,&\mbox{else.}
\end{array}\right.\\
\]
Hence, $\left(x^t,y^t\right)$ fulfills (\ref{eq:kkt-1}). Also it is straightforward to check that (\ref{eq:kkt-2}) and (\ref{eq:kkt-3}) are fulfilled. 
Thus, $\left(x^t,y^t\right)$ is a Karush-Kuhn-Tucker point of $\mathcal{S}$. Moreover, ND1 is satisfied as well in view of Theorem \ref{thm:mpoc-licq}. Similar to (\ref{eq:kkt-2}) and (\ref{eq:kkt-3}), ND2 holds at $\left(x^t,y^t\right)$.
It remains to show ND3, i.e.
\[\xi_k^T D^2 L^\mathcal{S}\left( x^t, y^t\right) \xi_k\ne 0\quad \mbox{for }k=1,\ldots,2n-\alpha_t^{\mathcal{S}},\]
where  $\xi_1,\ldots\xi_{2n-\alpha_t^{\mathcal{S}}}$ form a basis of
$\mathcal{T}^{\mathcal{S}}_{\left(x^t,y^t\right)}$, cf. Step 1a from the proof of Theorem \ref{thm:regul}. Note, that by construction $\alpha_t^{\mathcal{S}}$ is constant for $t$ sufficiently small. Thus, we refer to it as $\alpha^{\mathcal{S}}$.
Next, we construct for $t>0$ sufficiently small such a basis as follows. 
First, we choose eigenvectors $\bar \xi_1,\ldots,\bar \xi_{2n-\alpha^{\mathcal{R}}}$ of $D^2 L^\mathcal{R}\left(\bar x, \bar y\right)$ forming a basis of $\mathcal{T}^{\mathcal{R}}_{\left(\bar x,\bar y\right)}$, cf. Step 1b from the proof of Theorem \ref{thm:regul}.
With similar arguments as in Step 2b of the proof of Theorem \ref{thm:regul} and by using the implicit function theorem, we find $\xi_j^t \in \mathcal{T}^{\mathcal{S}}_{\left(x^t,y^t\right)}$,$j=1,\ldots,2n-\alpha^{\mathcal{R}}$, still linearly independent. The remaining $2n-\alpha^{\mathcal{S}}-\left(2n-\alpha^{\mathcal{R}}\right)=\left\vert a_{00}\left(\bar x,\bar y\right)\right\vert$ vectors are chosen as follows. Namely, for $\bar i\in a_{00}(\bar x,\bar y)$ we consider 
\[
\mathcal{T}^{\bar i}_t=\left\{
\xi \in \R^{2n}\,\left\vert\, \begin{array}{l} \begin{pmatrix}
Dh_p(x^t),0\end{pmatrix} \xi=0, p \in P, 
\begin{pmatrix}
Dg_q(x^t),0\end{pmatrix}\xi=0,q \in Q_0(\bar x),\\
\begin{pmatrix}
0,e
\end{pmatrix}\xi=0,
\begin{pmatrix}
0,e_i
\end{pmatrix}\xi=0, i\in \mathcal{E}(\bar y),\\
\begin{pmatrix}
e_i,0
\end{pmatrix}\xi=0, i \in a_{00}(\bar x,\bar y)\backslash\{\bar i\},
\begin{pmatrix}
0,e_i
\end{pmatrix}\xi=0, i \in a_{00}(\bar x,\bar y)\backslash\{\bar i\},\\
\begin{pmatrix}
e_{\bar i},\frac{\varrho^t_{2,\bar i}}{\varrho^t_{1,\bar i}}{e_{\bar i}}
\end{pmatrix}\xi=0,
\begin{pmatrix}
y^t_ie_i,x^t_ie_i
\end{pmatrix}\xi=0, i \in a_{01}(\bar x,\bar y)\cup a_{10}(\bar x,\bar y)
\end{array}
\right.\right\}.
\]
As in Step 4 of the proof of Theorem \ref{thm:regul}, we can find $\bar \xi_{\bar i} \in \mathcal{T}^{\bar i}_0\backslash \mathcal{T}^{\mathcal{R}}_{\left(\bar x,\bar y\right)}$. Especially, $\left(e_{\bar i}, 0\right) \bar \xi_{\bar i} \ne 0$.
We note that $\lim\limits_{t\to 0} \frac{x^t_{\bar i}}{y^t_{\bar i}}=\frac{\bar \varrho_{2,\bar i}}{\bar \varrho_{1, \bar i}}$. Using this and the implicit function theorem, we find for $t$ sufficiently small a vector $\xi_{\bar i}^t \in \mathcal{T}^{\mathcal{S}}_{\left(x^t,y^t\right)}$.
It is then straightforward to check, that $\xi_j^t,j \in \left\{1,\ldots,2n-\alpha^{\mathcal{R}}\right\}$ and $\xi_{\bar i}^t$, $\bar i \in a_{00}(\bar x, \bar y)$, indeed form a basis of $\mathcal{T}^{\mathcal{S}}_{\left(x^t,y^t\right)}$.

We continue by considering the following limits with respect to the subsets of  
$\mathcal{H}\left(x^t,y^t\right)$ for any sequence of vectors $\xi^t \in \mathcal{T}^{\mathcal{S}}_{\left(x^t,y^t\right)}$ from the constructed base, cf. the definition of $\eta$-multipliers:
\[
 \begin{array}{lclcl}
 \displaystyle \lim\limits_{t \to 0} \sum\limits_{i \in a^<_{01}\left(\bar x, \bar y\right)} \eta^{\le,t}_i\xi^t_i\xi^t_{n+i}
 &=& \displaystyle \lim\limits_{t \to 0} \sum\limits_{i \in a^<_{01}\left(\bar x, \bar y\right)} \sigma_{1,i}^t\frac{1}{\bar y_i}\frac{x^t_i}{y^t_i}\left(\xi^t_{n+i}\right)^2
&=&0,
\\ \displaystyle
 \lim\limits_{t \to 0} \sum\limits_{i \in a^>_{01}\left(\bar x, \bar y\right)} \eta^{\ge,t}_i\xi_i^t\xi^t_{n+i}
 &=& \displaystyle \lim\limits_{t \to 0} \sum\limits_{i \in a^>_{01}\left(\bar x, \bar y\right)} -\sigma_{1,i}^t\frac{1}{\bar y_i}\frac{x^t_i}{y^t_i}\left(\xi^t_{n+i}\right)^2
 &=&0,
\\ \displaystyle
 \lim\limits_{t \to 0} \sum\limits_{i \in a^<_{10}\left(\bar x, \bar y\right), x^t_{i}>0} \eta^{\le,t}_i\xi^t_i\xi^t_{n+i}
 &=& \displaystyle \lim\limits_{t \to 0} \sum\limits_{i \in a^<_{10}\left(\bar x, \bar y\right), x^t_{i}>0} \sigma_{2,i}^t\frac{1}{\bar x_i}\frac{y^t_i}{x^t_i}\left(\xi^t_{i}\right)^2 
 &=&0,
\\ \displaystyle
 \lim\limits_{t \to 0} \sum\limits_{i \in a^<_{10}\left(\bar x, \bar y\right), x^t_{i}<0} \eta^{\ge,t}_i\xi^t_i\xi^t_{n+i}
 &=& \displaystyle \lim\limits_{t \to 0} \sum\limits_{i \in a^<_{10}\left(\bar x, \bar y\right), x^t_{i}<0} -\sigma_{2,i}^t\frac{1}{\bar x_i}\frac{y^t_i}{x^t_i}\left(\xi^t_{i}\right)^2 
 &=&0.
 \end{array}
\]
If for some $i \in a^<_{00}(\bar x, \bar y)$ or $i \in a^>_{00}(\bar x, \bar y)$ it holds $\lim\limits_{t\to 0}\left(e_{i}, 0\right) \xi^t\ne 0$, we observe:
\[
 \begin{array}{lclcl}
 \displaystyle \lim\limits_{t \to 0} \sum\limits_{i \in a^<_{00}\left(\bar x, \bar y\right)} \eta^{\le,t}_i\xi^t_i\xi^t_{n+i}
 &=& \displaystyle\lim\limits_{t \to 0} \sum\limits_{i \in a^<_{00}\left(\bar x, \bar y\right)} -\frac{\sqrt{\varrho_{1,i}^t\varrho_{2,i}^t}}{\sqrt{t}}\frac{\varrho_{1,i}^t}{\varrho_{2,i}^t}\left(\xi^t_{i}\right)^2
 &=&-\infty,
\\ \displaystyle \lim\limits_{t \to 0} \sum\limits_{i \in a^>_{00}\left(\bar x, \bar y\right)} \eta^{\ge,t}_i\xi^t_i\xi^t_{n+i} &=&\displaystyle \lim\limits_{t \to 0} \sum\limits_{i \in a^>_{00}\left(\bar x, \bar y\right)} -\frac{\sqrt{-\varrho_{1,i}^t\varrho_{2,i}^t}}{\sqrt{t}}\frac{\varrho_{1,i}^t}{\varrho_{2,i}^t}\left(\xi^t_{i}\right)^2
 &=&\infty.
 \end{array}
\]
Finally, we calculate as in (\ref{eq:s-r-h1}):
\[
\label{eq:s-r-h2}
\begin{array}{rcl}
\left(\xi^t\right)^T D^2 L^\mathcal{S}\left( x^t, y^t\right) \xi^t&=& \displaystyle
 \left(\xi^t\right)^T D^2 L^\mathcal{R}\left(x^t, y^t\right) \xi^t - \sum\limits_{i\in \mathcal{H}^\ge\left(x^t,y^t\right)} 2\eta_i^{\ge,t} \xi^t_i  \xi^t_{n+i} \\ && \displaystyle +
\sum\limits_{i\in \mathcal{H}^\le\left(x^t,y^t\right)} 2\eta_i^{\le,t}  \xi^t_i \xi^t_{n+i}.
\end{array}
\]
Altogether, we obtain for any basis vector $\xi_j^t,j \in \left\{1,\ldots, 2n-\alpha^{\mathcal{R}}\right\}$ of $\mathcal{T}^{\mathcal{S}}_{\left(x^t,y^t\right)}$:
\begin{equation}
    \label{eq:eig-j}
\lim\limits_{t \to 0}\left(\xi_j^t\right)^T D^2 L^\mathcal{S}\left( x^t, y^t\right) \xi_j^t
=\bar a_j\left\lVert \bar \xi_j \right\rVert^2,\end{equation}
where $\bar a_j$ is a nonzero eigenvalue of $D^2 L^\mathcal{R}\left(\bar x, \bar y\right)$ due to the choice of $\bar \xi_j$ and NDT4. 
Let us now focus on the basis vectors $\xi^t_{\bar i}$, $\bar i \in a_{00}(\bar x, \bar y)$.
We then have: 
\begin{equation}
    \label{eq:eig-i00}
\lim\limits_{t \to 0}\left(\xi_{\bar i}^t\right)^T D^2 L^\mathcal{S}\left( x^t, y^t\right) \xi_{\bar i}^t
=-\infty.
\end{equation}
This is due to the following reasoning. First,
$\left(\xi_{\bar i}^t\right)^T D^2 L^\mathcal{R}\left( x^t, y^t\right) \xi_{\bar i}^t$ is bounded due to the construction of $\xi_{\bar i}^t$, and, moreover, $\lim\limits_{t\to 0}\left(e_{\bar i}, 0\right) \xi^t_{{\bar i}}\ne 0$.
We conclude that ND3 is fulfilled. Additionally, the T-index of $(\bar x, \bar y)$ is equal to the sum of its quadratic index and its biactive index, i.e. $m = \overline{QI}^{\mathcal{R}}+\left\vert a_{00}(\bar x,\bar y)\right\vert $. In view of (\ref{eq:eig-j}) and (\ref{eq:eig-i00}), the quadratic index ${QI}^\mathcal{S}_t$ of $(x^t,y^t)$ is then exactly $m$ for $t$ sufficiently small. 
\end{proof}

\section*{Conclusions}

In \cite{laemmel:reform}, the number of saddle points for the regularized continuous reformulation of CCOP has been estmated. Namely, each saddle point of CCOP generates exponentially many saddle points of $\mathcal{R}$, all of them having the same index. It has been concluded there that the introduction of auxiliary $y$-variables shifts the
complexity of dealing with the cardinality constraint in CCOP into the appearance of multiple saddle points for its continuous reformulation. From our extended convergence analysis of the Scholtes-type regularization it follows that the number of its saddle points also grows exponentially as compared to that of CCOP. We emphasize that this issue is at the core of numerical difficulties if solving CCOP up to global optimality by means of the Scholtes-type regularization method. 
To the best of our knowledge this is the first paper studying convergence properties of the Scholtes-type regularization method in the vicinity of saddle points, rather than of minimizers. The ideas from our analysis can be potentially applied not only for other classes of nonsmooth optimization problems, such as MPCC, MPVC, MPSC, and MPOC, but also for other regularization schemes known from the literature. 

\bibliography{sn-bibliography.bib}

\end{document}